\documentclass[a4paper,12pt]{amsart}

\usepackage[centertags]{amsmath}
\usepackage{amsfonts,amssymb,amstext,amsthm,newlfont,latexsym,stmaryrd}
\usepackage[left=3.0cm,right=3.0cm,top=3.7cm,bottom=3.7cm]{geometry}
\usepackage{tikz,hyperref}
\usetikzlibrary{arrows,automata,positioning,fit,shapes}

\theoremstyle{plain}
\newtheorem{theorem}{Theorem}[section]
\newtheorem{corollary}[theorem]{Corollary}
\newtheorem{lemma}[theorem]{Lemma}
\newtheorem{proposition}[theorem]{Proposition}
\theoremstyle{definition}
\newtheorem{definition}[theorem]{Definition}
\newtheorem{example}[theorem]{Example}
\theoremstyle{remark}
\newtheorem{remark}[theorem]{Remark}

\makeatletter
  \DeclareRobustCommand{\cev}[1]{%
    \mathpalette\do@cev{#1}%
  }
  \newcommand{\do@cev}[2]{%
    \fix@cev{#1}{+}%
    \reflectbox{$\m@th#1\vec{\reflectbox{$\fix@cev{#1}{-}\m@th#1#2\fix@cev{#1}{+}$}}$}%
    \fix@cev{#1}{-}%
  }
  \newcommand{\fix@cev}[2]{%
    \ifx#1\displaystyle
      \mkern#23mu
    \else
      \ifx#1\textstyle
        \mkern#23mu
      \else
        \ifx#1\scriptstyle
          \mkern#22mu
        \else
          \mkern#22mu
        \fi
      \fi
    \fi
  }
\makeatother

\begin{document}

% \title[]{Class-closing factor maps send Markov measures to Gibbs mearues}
\title[Loss of Gibbs property]{Loss of Gibbs property in one-dimensional mixing shifts of finite type}

\author[S. Hong]{Soonjo Hong} \address{Hongik University \\
    2639, Sejong-ro, Jochiwon-eup 
    \\
    Sejong\\
    South Korea}
\email{hsoonjo@hongik.ac.kr}

\date{}
\subjclass[2010]{Primary 37B10, Secondary 37B40}
\keywords{shift of finte type, factor map, gibbs measure}
\begin{abstract}
  Let $\pi$ be a factor map from a one-dimensional mixing shift of finite type $X$ onto a sofic shift $Y$.
  We investigate when $\pi$ sends Gibbs measures on $X$ to non-Gibbs measures on $Y$.
\end{abstract}
\maketitle

\section{Introduction}

The preservation and loss of Gibbs property of random fields under renormalisation transformation have been discovered and studied in statistical mechanics \cite{JozCK98,vanFS93}.
From the viewpoint of symbolic dynamics, Gibbs states can be formulated as Gibbs measures on shift spaces \cite{Bow08,Sin72}.
In this context, renormalisation transformations are regarded as factor maps between shift spaces.
  
In the present paper, we study which factor maps lose Gibbs property of measures when the domains are one-dimensional mixing shifts of finite type.
Such a problem was studied in \cite{ChaU03,Yoo10} using linear algebra, in \cite{Kem11} constructing concrete potential functions and computing their variation, in \cite{Ver11} disintegrating a measure into non-homogeneous equilibrium states and in \cite{Pir18} applying cone techniques and operator theory.
All the results found in \cite{ChaU03,Kem11,Ver11,Yoo10} suggest a crucial property for factor maps to preserve Gibbs property, named by Yoo in \cite{Yoo10} as {\em fiber-mixing} property.
We investigate other cases where factor maps are not fiber-mixing.
  
The notion of transition class is applied to the study.
Transition class was devised in \cite{AllQ13} and further developed in \cite{AllHJ15,AllHJ15E} to study factor maps from shifts of finite type.
The author will narrow down possible candidates for factor maps preserving Gibbs property excluding several cases of factor maps under some conditions on transition classes.

% \begin{theorem}\label{thm::necessity}
%   Let $\pi$ be a factor map from a mixing shift of type $X$ onto a sofic shift $Y$.
%   If $\pi$ is not nearly fiber-mixing, then it transforms some fully supported Markov measure on $X$ to a non-Gibbs measure on $Y$.
% \end{theorem}

% \begin{theorem}\label{thm::sufficiency}
%   Let $\pi$ be a factor map from a mixing shift of type $X$ onto a sofic shift $Y$.
%   If $\pi$ is nearly fiber-mixing and block-positive at the same time,
%   then it transforms every fully supported Markov measure on $X$ to a Gibbs measure on $Y$.
% \end{theorem}
  
\section{Backgrounds}

We assume basic knowledge of symbolic dynamics here.
If one needs more exposition about symbolic dynamics, refer to \cite{LinM95}.
In this section, we introduce transition classes, fiber-mixing factor maps and Gibbs measures.

\subsection{Transition class}
From now on, let $\mathcal{A}$ be a finite alphabet.
Recall that a shift of finite type $X$ is 1-step if and only if for any $uv$ and $vw$ in $\mathcal{B}(X)$ with $|v|=1$ we have $uvw$ in $\mathcal{B}(X)$.
Any shift of finite type is conjugate to a 1-step shift of finite type.
In this paper, let $\pi:X\to Y$ be a 1-block factor map from a one-dimensional two-sided 1-step shift of finite type $X$ over $\mathcal{A}$ onto a sofic shift $Y$ over $\mathcal{A}$, unless stated otherwise.

\begin{definition}\label{defn::bridge}
  Let $u$ and $w$ be in the set $\mathcal{B}_l(X)$ of words of length $l$ in $X$ for some $l>0$ with $\pi(u)=\pi(w)$.
  Then a path $v$ in $\mathcal{B}_l(X)$ is called a {\em bridge} from $u$ to $w$ if
  \[ v|_1=u|_1,v|_l=w|_l\text{ and }\pi(v)=\pi(u)=\pi(w). \]
  A pair of bridges from $u$ to $w$ and from $w$ to $u$ is called a {\em two-way bridge} between $u$ and $w$.
\end{definition}

\begin{definition}\label{defn::class-degree}
  Given $m$ in $\mathbb Z$, $y$ in $Y$ and $x,x'$ in $\pi^{-1}(y)$ a {\em right} $m$-{\em bridge} from $x$ to $x'$ is another preimage $\vec x$ of $y$ such that for some $n>m$ we have
  \[ x|_{(-\infty,m]}=\vec x|_{(-\infty,m]}\text{ and }\vec x|_{[n,\infty)}=x'|_{[n,\infty)}. \]
  A {\em right transition} from $x$ to $x'$ is a sequence $\{\vec x^{(m)}\}_{m\in\mathbb{Z}}$ of $m$-bridges from $x$ to $x'$.
  When there is a right transition from $x$ to $x'$, we write $x\to^rx'$.

  We say that $x$ and $x'$ are {\em right equivalent} and write $x\sim^rx'$ if $x\to^rx'$ and $x'\to^rx$.
  It is indeed an equivalence relation and the equivalence class $[x]^r$ of $x$ up to $\sim^r$ is called a {\em right (transition) class}.
  Set $\llbracket y\rrbracket^r=\{[x]^r\mid x\in\pi^{-1}(y)\}$ and $d_\pi^r(y)=|\llbracket y\rrbracket^r|$ for $y$ in $Y$.
  The {\em right class degree} $d_\pi^r$ of $\pi$ is defined to be $d_\pi^r=\min_{y\in Y}d_\pi^r(y)$.
\end{definition}
    
\begin{definition}\label{defn::nonstop_transition}
  Let $y$ be in $Y$ with distinct right classes $C\to^rC'$.
  If for any right class $C''\ne C,C'$ over $y$ there is no transition $C\to^rC''$ nor $C''\to^rC'$,
  then the transition $C\to^rC'$ is said to be {\em nonstop}.
\end{definition}

% \begin{theorem}\label{thm::previous_results}\cite{AllHJ15E,AllQ13}
%   Let $\pi$ be a factor map from an irreducible shift of finite type $X$ onto a sofic shift $Y$.
%   Let $y$ be a right transitive point of $Y$.
%   Then $d_\pi^r(y)=d_\pi^r$. Also $[x]^r$ contains a right transitive point of $X$ for all $x$ in $\pi^{-1}(y)$.
% \end{theorem}

We may consider a {\em left} $m$-bridge from $x$ to $x'$ which is a preimage $\vec x$ of $y$ such that for some $n<m$ we have
  \[ x'|_{(-\infty,n]}=\vec x|_{(-\infty,n]}\text{ and }\vec x|_{[m,\infty)}=x|_{[m,\infty)}. \]
Subsequently we consider a {\em left transition} $x\to^lx'$ from $x$ to $x'$, a sequence $\{\vec x^{(m)}\}_{m\in\mathbb Z}$ of left $m$-bridges from $x$ to $x'$.
Then {\em left equivalence} $x\sim^lx'$ and the {\em left class}  $[x]^l$ of $x$ up to $\sim^l$ are considered as well.
If for $y$ in $Y$ we put $\llbracket y\rrbracket^l=\{[x]^l\mid x\in\pi^{-1}(y)\}$ and $d_\pi^l(y)=|\llbracket y\rrbracket^l|$,
then the {\em left class degree} $d_\pi^l=\min_{y\in Y}d_\pi^l(y)$ of $\pi$ is equal to $d_\pi^r$ \cite[\S 6]{AllHJ15}.
Hence we may omit ``left'' or ``right'' in front of {\em class degree} and just denote it by $d_\pi$.
Still, a point in $Y$ may have distinct numbers of right and left classes.
So we will continue to distinguish $d_\pi^r(\cdot)$ from $d_\pi^l(\cdot)$.
  
\begin{definition}\label{defn::routable_blocks}
  A word $u$ in $\mathcal{B}(X):=\bigcup_{n\ge0}\mathcal{B}_n(X)$ is said to be {\em routable} through $M$ at $n$ for some $1<n<|u|$ and $M\subset\mathcal{A}$
  if there is $v$ in $\mathcal{B}(X)$ with
  \[ |u|=|v|,u|_1=v|_1,u|_{|u|}=v|_{|u|},\pi(u)=\pi(v)\text{ and }v|_n\in M. \]
  A word $w$ in $\mathcal{B}(Y)$ is said to be {\em fiber-routable} through $M$ at $n$ if all the words in $\pi^{-1}(w)$ are routable through $M$ at $n$. Define the {\em depth} $d_\pi(w)$ of $w$ by
  \[ d_\pi(w)=\min_{1<n<|w|}\min\{|M|\mid M\subset\mathcal{A},w\text{ is fiber-routable through }M\text{ at }n\}. \]
\end{definition}

  \begin{definition}\label{defn::tangled_blocks}
    Given $w$ in $\mathcal{B}(Y)$, a subset $\mathcal{W}$ of $\pi^{-1}(w)$ is said to be {\em tangled} if between any two words of $\mathcal{W}$ lies a two-way bridge.
    A partition of $\pi^{-1}(w)$ each member of which is tangled is said to be a {\em tangled} partition of $\pi^{-1}(w)$.
    The $\tau$-{\em depth} $\tau_\pi(w)$ of $w$ is the smallest cardinality of a tangled partition of $\pi^{-1}(w)$.
  \end{definition}
  
Every extension $uwv$ of a block $w$ in $\mathcal{B}(Y)$ has $\tau$-depth and depth no greater than $w$ has.
Also $\tau_\pi(w)\le d_\pi(w)$ and $d_\pi=\min_{w\in\mathcal{B}(Y)}d_\pi(w)=\min_{w\in\mathcal{B}(Y)}\tau_\pi(w)$ \cite{AllHJ15,AllHJ15E}.
  
A {\em recurrent point} is a point any word of which occurs infinitely often to the right of it.
  
\begin{theorem}\label{thm::depths_and_degree}\cite[Theorem 4.22]{AllHJ15E}
  For some $n$ in $\mathbb{Z}$ there are infinitely many occurrences of words with depth less than or equal to $d=d_\pi(y)$ in $y|_{[n,\infty)}$.
  If $y$ is recurrent, then
  \[ d=\min\{d_\pi(y|_{[m,n]})\mid m\le n\}=\min\{\tau_\pi(y|_{[m,n]})\mid m\le n\}. \]
\end{theorem}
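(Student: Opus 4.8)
The plan is to play the intrinsic description of $d=d_\pi(y)$ -- which comes from the right transition classes of the fibre $\pi^{-1}(y)$ -- against the combinatorial description of depth for finite subwords of $y$, using recurrence (for the second assertion) to move between the global and the finite pictures. Throughout I would use the facts already recorded: $\tau_\pi(w)\le d_\pi(w)$ for every word $w$, the fact that neither $d_\pi$ nor $\tau_\pi$ increases under extension of a word, and the identities $d_\pi=\min_{w}d_\pi(w)=\min_{w}\tau_\pi(w)$.

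For the first assertion I would fix preimages $x^{(1)},\dots,x^{(d)}$ of $y$, one in each right transition class over $y$, and produce subwords of $y$ with depth at most $d$ occurring at positions tending to $+\infty$. The mechanism is that a right transition yields, for every $m$, a preimage of $y$ agreeing with a given $x\in\pi^{-1}(y)$ up to coordinate $m$ and, from some later coordinate on, with the representative $x^{(i)}$ of $[x]^{r}$; since whether a block of $y$ can be rerouted through a prescribed symbol at a given interior coordinate depends only on its two endpoints -- of which there are finitely many -- a long enough window $w=y|_{[a,b]}$, with routing coordinate $k$ placed past the point where all the $x^{(i)}$ have been ``caught,'' has the property that every preimage of $w$ which extends to a preimage of $y$ reroutes at $k$ through the at most $d$ symbols used there by $x^{(1)},\dots,x^{(d)}$. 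Once one also handles the preimages of $w$ that do \emph{not} so extend (see below), this gives $d_\pi(w)\le d$; letting $n$ be the position of the first such $w$ yields infinitely many occurrences of depth-$\le d$ words inside $y|_{[n,\infty)}$, and in particular $\min\{d_\pi(y|_{[m,n]})\mid m\le n\}\le d$.

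For the equalities I would next, with $y$ recurrent, prove the reverse bound $d\le\tau_\pi(w)$ for every subword $w$ of $y$; together with $\tau_\pi\le d_\pi$ on words and the previous paragraph this squeezes the three quantities together. Rather than argue from one occurrence of $w$, I would pass to the longer and longer subwords $y|_{[m_0,n]}$, $n\to\infty$: their $\tau$-depths are nonincreasing, hence stabilise, and -- here recurrence is essential, both to pin the stable value to $\tau_\pi(w)$ for a minimising $w$ and to make the tangled partitions of consecutive windows compatible -- the corresponding tangled partitions (their parts being clopen) converge to a partition of $\pi^{-1}(y)$ in which any two points of a common part admit, arbitrarily far to the right, two-way bridges inside $y|_{[m_0,n]}$. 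Splicing such bridges, as $X$ being $1$-step permits, produces right $m$-bridges between those two points for every $m$, hence right equivalence; so $\pi^{-1}(y)$ has at most $\tau_\pi(w)$ right classes, that is $d\le\tau_\pi(w)$. Minimising over $w$ completes the proof.

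The main obstacle, shared by both halves, is that a finite subword of $y$ has preimages that need not extend to preimages of $y$; such ``dead ends'' carry no transition information, yet both reroutability and the tangled partition must still dispose of them. Controlling them is what forces the careful choice of window -- one wants subwords whose preimages all extend to full preimages of $y$; in the recurrent case recurrence supplies such subwords cofinally, while in general one must instead route the dead ends using data drawn from the limiting right-tail behaviour of $y$ -- and, in the second half, it is also what makes the convergence of the tangled partitions and the transitivity of the resulting ``same part arbitrarily far to the right'' relation the delicate points. This stabilisation-and-limit step is the technical heart, and it is where the structure theory of transition classes and tangled partitions of \cite{AllHJ15,AllHJ15E} does the real work.
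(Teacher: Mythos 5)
This theorem is stated in the paper as a citation of \cite[Theorem 4.22]{AllHJ15E} and is not proved here, so there is no in-paper proof to compare against; I can only measure your proposal against the argument in the cited source (whose structure is echoed in Remark~\ref{rmk::depths_and_degree_and_marked_blokcs} and Lemma~\ref{lmm::local_retract}). Your architecture is the right one: an upper bound $\min_{m\le n} d_\pi(y|_{[m,n]})\le d$ obtained by routing preimages through one representative per right class at a fixed coordinate, and, under recurrence, a lower bound $d\le\tau_\pi(w)$ obtained by pigeonholing the $d$ class representatives into the parts of a tangled partition at infinitely many occurrences of $w$ and splicing the resulting two-way bridges (the $1$-step hypothesis makes the splice legal) into right $m$-bridges for every $m$; combined with $\tau_\pi\le d_\pi$ this squeezes the three quantities together exactly as you say. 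For the second half I would note that plain pigeonhole on pairs of representatives already yields the contradiction $x^{(i)}\sim^r x^{(j)}$ when $\tau_\pi(w)<d$, so the ``convergence of tangled partitions'' and the transitivity worry you raise are avoidable.

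The gap is that the two obstacles you correctly name are left unresolved, and they are where the actual content lies. First, the dead ends: $d_\pi(w)$ and $\tau_\pi(w)$ quantify over \emph{all} of $\pi^{-1}(w)$, not only over restrictions of points of $\pi^{-1}(y)$. The resolution is not a clever choice of window but the compactness statement \cite[Lemma 4.15]{AllQ13} used in Lemma~\ref{lmm::local_retract}: for each $[m,n]$ there is $R$ with $\pi^{-1}(y)|_{[m,n]}=\pi^{-1}(y|_{[m-R,n+R]})|_{[m,n]}$, so after enlarging the window every preimage word restricts on the core to a genuine fibre restriction, and monotonicity of depth under extension finishes the reduction. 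Second, uniformity of the routing coordinate: a right transition $x\to^r x^{(i)}$ gives bridges whose arrival time depends on $x$, and $\pi^{-1}(y)$ is infinite, so ``long enough window'' is not yet meaningful; one needs the marked-block machinery of Lemma~\ref{lmm::marked_in_point} (the coordinate $\mathsf{mk}(x)$ past which $x|_k$ determines the set of classes reachable from $[x]^r$) together with finiteness of the alphabet to extract a single coordinate and a set of at most $d$ symbols that works for every preimage simultaneously — this is the ``stages 1 to 3'' of \cite[Theorem 4.22]{AllQ13} alluded to in Remark~\ref{rmk::depths_and_degree_and_marked_blokcs}. Your observation that routability depends only on the endpoints of the window does not by itself supply this uniformity. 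As a strategy the proposal is faithful to the cited proof; as a proof it is missing precisely these two lemmas.
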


\subsection{Fiber-mixing factor maps}

Fiber-mixing factor maps are known to preserve Gibbs property \cite{Kem11,Pir18,Ver11,Yoo10}.

\begin{definition}\label{defn::fiber-mixing}
  If given any $y$ in $Y$ and its preimage $x$ and $x'$ there is $\vec{x}$ which is left asymptotic to $x$ and right asymptotic to $x'$,
  then $\pi$ is said to be {\em fiber-mixing}.
\end{definition}

Fiber-mixing factor maps are characterised by other properties as well: {\em class-closing} properties and {\em continuing} properties.

\begin{definition}\label{defn::class-closing}
  If any left asymptotic preimages $x$ and $x'$ of $y$ are right equivalent,
  then $\pi$ is said to be {\em right class-closing}.
\end{definition}

\begin{definition}\label{defn::continuing}
  If given $x$ in $X$ and $y$ in $Y$ such that $\pi(x)$ is left asymptotic to $y$ there is some preimage $x'$ of $y$ left asymptotic to $x$,
  then $\pi$ is said to be {\em right continuing}.
\end{definition} 

In symmetric ways, we define {\em left} class-closing factor maps and {\em left} continuing factor maps.
It is also helpful that continuing factor maps are eresolving up to topological conjugacy \cite{BoyT84,Jun11,Yoo15}.

\begin{definition}\label{defn::eresolving}
  If given any $ab$ in $\mathcal{B}_2(Y)$ with $e$ in $\pi^{-1}(a)$ there is $f$ in $\pi^{-1}(b)$ with $ef$ in $\mathcal{B}_2(X)$,
  then $\pi$ is said to be {\em right eresolving}.
  {\em Left eresolving factor maps} are defined in a similar way and {\em bi-eresolving} factor maps are factor maps which are both left and right eresolving factor maps. 
\end{definition}

{\em Bi}-class-closing factor maps, {\em bi}-continuing factor maps and {\em bi}-eresolving factor maps refer to both left and right class-closing factor maps, continuing factor maps and eresolving factor maps, respectively.

\begin{theorem}\cite{AllHJ15}\label{thm::constant-class-to-one_and_class-closing_with_continuing}
  The following are equivalent:
  \begin{enumerate}
    \item $d_\pi^r(\cdot)$ is constant on $Y$.
    \item $d_\pi^l(\cdot)$ is constant on $Y$.
    \item $\pi$ is left class-closing and right continuing.
    \item $\pi$ is right class-closing and left continuing.
    \item $\pi$ is bi-class-closing and bi-continuing.
  \end{enumerate}
\end{theorem}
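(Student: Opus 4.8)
The plan is to prove the cycle $(5)\Rightarrow(3)\Rightarrow(1)\Rightarrow(5)$ together with its time-reversed mirror $(5)\Rightarrow(4)\Rightarrow(2)\Rightarrow(5)$. Reversing the shift (replacing $\pi$ by $\bar\pi\colon\bar X\to\bar Y$, $\bar x_i=x_{-i}$) interchanges left and right class-closing, left and right continuing, and $d_\pi^l(\cdot)$ with $d_\pi^r(\cdot)$; hence it swaps conditions $(3)\leftrightarrow(4)$ and $(1)\leftrightarrow(2)$ while fixing $(5)$, so any implication proved for an arbitrary such $\pi$, applied to $\bar\pi$, yields its mirror. Since $(5)\Rightarrow(3)$ and $(5)\Rightarrow(4)$ are immediate, it remains to prove $(3)\Rightarrow(1)$ and $(1)\Rightarrow(5)$; the mirror then supplies $(4)\Rightarrow(2)$ and $(2)\Rightarrow(5)$, and the two cycles together give all five equivalences. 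Throughout we may replace $X$ by a topological conjugate, since each of the five conditions is a conjugacy invariant; in particular, using \cite{BoyT84,Jun11,Yoo15}, we may assume $\pi$ is right eresolving whenever it is right continuing and left eresolving whenever it is left continuing.

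For $(3)\Rightarrow(1)$: the function $d_\pi^r(\cdot)\colon Y\to\mathbb{Z}$ is bounded and shift-invariant, and $Y$, being a factor of a mixing shift of finite type, is topologically transitive, so it suffices to show $d_\pi^r(\cdot)$ is continuous, i.e.\ locally constant: a continuous shift-invariant integer-valued function on a transitive subshift is constant, being constant along a dense orbit. To see local constancy I would encode the decomposition of fibers into right classes by a finite-state machine reading $Y$ from left to right, built from the fiber-product (``$R$-graph'') construction of \cite{AllQ13}: left class-closing, which forces the right class of a preimage to depend only on any one of its left rays, makes the transition of this machine deterministic, while right continuing, in its eresolving form, makes every state accessible. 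The number of right classes over a point $y$ is then read off from the state at which the machine stabilizes as it scans $y$ to the right, and this stable value is one carried by a recurrent state; but a recurrent state carrying more than $d_\pi$ classes would, by accessibility, produce a point with more than $d_\pi$ right classes, contradicting $d_\pi=\min_{y}d_\pi^r(y)$. Hence $d_\pi^r(y)=d_\pi$ for every $y$. The step I expect to be hardest is making the passage from the finite machine to the genuine (bi-infinite) transition classes rigorous: this is precisely where the depth and $\tau$-depth apparatus, and Theorem~\ref{thm::depths_and_degree} in particular, are needed, since they are what pin transition classes down inside bounded windows.

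For $(1)\Rightarrow(5)$: assume $d_\pi^r(\cdot)\equiv d_\pi$. The first task is to upgrade this to $d_\pi^l(\cdot)\equiv d_\pi$; I would argue by contradiction, and given $y_0$ with $d_\pi^l(y_0)>d_\pi$ follow it to the right by a recurrent point realizing $d_\pi$, then invoke Theorem~\ref{thm::depths_and_degree} (together with the fact that $d_\pi$ is the minimal $\tau$-depth of a subword) to splice a point carrying more than $d_\pi$ right classes. Granting constancy of both $d_\pi^r(\cdot)$ and $d_\pi^l(\cdot)$, I then derive the four structural properties. For right continuing: given $x$ and $y$ with $\pi(x)$ left asymptotic to $y$, both $\pi(x)$ and $y$ carry exactly $d_\pi$ right classes, Theorem~\ref{thm::depths_and_degree} localizes these classes in bounded ``magic'' windows lying to the left of the coincidence coordinate, and matching the classes over $\pi(x)$ with those over $y$ shows that the left ray $x|_{(-\infty,n]}$ extends to a preimage of $y$ for every $n$; a compactness argument as $n\to\infty$ produces the required $x'$. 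Left class-closing is the dual bookkeeping: two left-asymptotic preimages of $y$ in distinct right classes would, inside a magic window to the right, force strictly more than $d_\pi$ right classes over $y$. The remaining two properties, right class-closing and left continuing, follow from the constancy of $d_\pi^l(\cdot)$ by the mirror of these arguments, and the four together are exactly $(5)$. Here the delicate points are the construction of the counterexample point in the $d_\pi^r$-to-$d_\pi^l$ step and, again, the correct use of Theorem~\ref{thm::depths_and_degree} to confine transition classes to finite windows.
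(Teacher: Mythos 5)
The paper does not prove this theorem at all: it is quoted from \cite{AllHJ15} with no argument given, so there is no in-paper proof to compare yours against and I can only judge the proposal on its own terms. Your reduction is sound --- time reversal does swap $(3)\leftrightarrow(4)$ and $(1)\leftrightarrow(2)$ while fixing $(5)$, and $(5)\Rightarrow(3),(4)$ is trivial --- so the theorem does come down to $(3)\Rightarrow(1)$ and $(1)\Rightarrow(5)$. But those two implications carry all the content, and you have only sketched them, explicitly deferring the steps that are hard.

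Concretely, in $(3)\Rightarrow(1)$ the finite-state machine is never defined (are its states tangled partitions of $\pi^{-1}(w)$, subsets of $\mathcal{A}$, something else?), and the claims that left class-closing makes it deterministic and right continuing makes every state accessible are assertions, not arguments. Worse, the closing step is a non sequitur: a recurrent state ``carrying more than $d_\pi$ classes,'' and hence a point $y$ with $d_\pi^r(y)>d_\pi$, does \emph{not} contradict $d_\pi=\min_y d_\pi^r(y)$ --- points exceeding the minimum are exactly what you must rule out, and whenever $d_\pi^r(\cdot)$ is non-constant such points genuinely exist. What you would actually need is an argument that under $(3)$ the class count cannot increase as one passes from a recurrent point realizing $d_\pi$ to an arbitrary point; that is the real work. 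Similarly, in $(1)\Rightarrow(5)$ the derivation of right continuing rests on an unproved ``matching'' of the right classes over $\pi(x)$ with those over $y$ across the common left ray, and the windows you invoke from Theorem~\ref{thm::depths_and_degree} are guaranteed only in $y|_{[n,\infty)}$ for some $n$ (or under a recurrence hypothesis), not to the left of the coincidence coordinate where you need them. The skeleton is a reasonable plan of attack, but as it stands neither key implication is proved.
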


\begin{theorem}\cite{AllHJ15}\label{thm::fiber-mixing_conditions}
  The following are equivalent:
  \begin{enumerate}
    \item $\pi$ is fiber-mixing.
    \item For all $y$ in $Y$ $d_\pi^r(y)=d_\pi^l(y)=1$.
    \item $\pi$ is bi-class-closing and bi-continuing and $d_\pi=1$.
    \item There is $N>0$ such that for all $w$ in $\mathcal{B}_N(Y)$ $d_\pi(w)=1$.
    \item\label{condition::fiber-mixing_tangled} There is $N>0$ such that for all $w$ in $\mathcal{B}_N(Y)$ $\tau_\pi(w)=1$.
  \end{enumerate}
\end{theorem}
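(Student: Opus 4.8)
The plan is to establish the five conditions equivalent through the cycle $(3)\Rightarrow(4)\Rightarrow(5)\Rightarrow(1)$ supplemented by the formal equivalences $(1)\Leftrightarrow(3)\Leftrightarrow(2)$; of these links three are one- or two-line arguments and one, $(3)\Rightarrow(4)$, carries the real content.

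The equivalence $(2)\Leftrightarrow(3)$ is bookkeeping with Theorem~\ref{thm::constant-class-to-one_and_class-closing_with_continuing} and the definition of $d_\pi$. Indeed $d_\pi^r(\cdot)\equiv1$ is in particular constant, so by that theorem it is equivalent to $\pi$ being bi-class-closing and bi-continuing together with the constant value being $1$; and since a constant $d_\pi^r(\cdot)$ must equal $\min_{y}d_\pi^r(y)=d_\pi$, appending ``$d_\pi=1$'' is exactly the normalisation $d_\pi^r(\cdot)\equiv1$, while $d_\pi^l(\cdot)\equiv1$ follows because $\min_y d_\pi^l(y)=\min_y d_\pi^r(y)$. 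The equivalence $(1)\Leftrightarrow(3)$ — that fiber-mixing coincides with bi-class-closing-plus-bi-continuing-plus-$d_\pi=1$ — is the characterisation of fiber-mixing by the closing and continuing properties recalled just before the statement; its substantive half, deducing the interpolation property of Definition~\ref{defn::fiber-mixing} from the closing, continuing and depth hypotheses, is where the mixing of $X$ is genuinely used, and I would take it from \cite{AllHJ15}. Its other half is immediate once $(2)$ is available: if $d_\pi^r(y)=1$ then any two preimages $x,x'$ of $y$ are right equivalent, hence a right transition from $x$ to $x'$ exists, and any one of its $m$-bridges is a preimage of $y$ left asymptotic to $x$ and right asymptotic to $x'$.

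For $(4)\Rightarrow(5)$ the same $N$ works, since $\tau_\pi(w)\le d_\pi(w)$ and $\tau_\pi(w)\ge1$ always. For $(5)\Rightarrow(1)$, take $y\in Y$ and preimages $x,x'$, set $w=y|_{[1,N]}$, and use that $\pi^{-1}(w)$ is tangled to obtain a bridge $b\in\mathcal B_N(X)$ from $x|_{[1,N]}$ to $x'|_{[1,N]}$; then $x|_{(-\infty,1]}\,b|_{[2,N-1]}\,x'|_{[N,\infty)}$ is a legitimate point of $X$ (the pieces overlap consistently because $b|_1=x|_1$ and $b|_N=x'|_N$, and $X$ is $1$-step), it maps onto $y$ because $\pi$ is a $1$-block code and the labels match symbol by symbol, and it is left asymptotic to $x$ and right asymptotic to $x'$; hence $\pi$ is fiber-mixing.

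The crux is $(3)\Rightarrow(4)$: from the single word of depth one guaranteed by $d_\pi=1$, together with bi-class-closing and bi-continuing, manufacture a uniform length $N$ past which \emph{every} word has depth one. I would argue by contradiction and compactness. If $(4)$ fails there are words of arbitrarily large length with depth $\ge2$; since an extension has depth no larger than the word it extends, ``depth $\ge2$'' passes to subwords, so these words form a factorial language of arbitrarily large length, whence the subshift $Y'\subseteq Y$ consisting of the bi-infinite sequences all of whose subwords have depth $\ge2$ is non-empty. A point $z$ of a minimal subsystem of $Y'$ is almost periodic — in particular recurrent — and still has all of its subwords of depth $\ge2$, so by Theorem~\ref{thm::depths_and_degree}, $d_\pi(z)=\min\{d_\pi(z|_{[m,n]})\mid m\le n\}\ge2$. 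On the other hand $(3)\Leftrightarrow(2)$ gives $d_\pi^r(z)=1$, and the contradiction comes from the comparison, for recurrent points, of the point-depth $d_\pi(z)$ of Theorem~\ref{thm::depths_and_degree} with the class degree $d_\pi^r(z)$, which I would import from \cite{AllHJ15E}. The two places I expect resistance are exactly these: producing a recurrent point on which \emph{all} windows are deep, and pinning down the correct inequality between the point-depth and the class degree; with those in hand the remainder is routine.
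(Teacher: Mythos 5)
The paper does not actually prove this theorem; it is imported verbatim from \cite{AllHJ15}, so there is no in-house argument to compare yours against and I am judging the reconstruction on its own terms. Most of it is sound: $(2)\Leftrightarrow(3)$ via Theorem~\ref{thm::constant-class-to-one_and_class-closing_with_continuing} is correct, $(4)\Rightarrow(5)$ is trivial as you say, the splicing in $(5)\Rightarrow(1)$ is valid for a $1$-step SFT and a $1$-block code, and your $(3)\Rightarrow(4)$ by contradiction --- factoriality of the language of words of depth $\ge2$, compactness, a uniformly recurrent point $z$ in a minimal subsystem of $Y'$, and Theorem~\ref{thm::depths_and_degree} --- works; the identification of the quantity $d_\pi(z)$ in that theorem with the class degree $d_\pi^r(z)$, which you flag as the point of resistance, is indeed the intended reading (Remark~\ref{rmk::depths_and_degree_and_marked_blokcs} uses $d=d_\pi^r(y)$ in exactly this context), so the contradiction with $d_\pi^r(z)=1$ goes through.

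The genuine gap is logical: no implication \emph{out of} condition (1) is ever established. You have $(2)\Leftrightarrow(3)$, $(3)\Rightarrow(4)\Rightarrow(5)\Rightarrow(1)$, a cited $(3)\Rightarrow(1)$, and what you call the ``other half'' of $(1)\Leftrightarrow(3)$ --- but that paragraph proves $(2)\Rightarrow(1)$ (from $d_\pi^r(y)=1$ extract an $m$-bridge, hence an interpolating point), which is yet another arrow \emph{into} (1), not the needed $(1)\Rightarrow(3)$. So (1) is shown to follow from each of (2)--(5), but nothing follows from (1), and the five conditions are not proved equivalent. The repair is already latent in your $(5)\Rightarrow(1)$ step: run the splicing at every coordinate. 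For each $m$, the fiber of $y|_{[m+1,m+N]}$ is tangled, giving a bridge $b$ with $b|_{m+1}=x|_{m+1}$ and $b|_{m+N}=x'|_{m+N}$, so that $x|_{(-\infty,m]}\,b\,x'|_{[m+N+1,\infty)}$ is a right $m$-bridge from $x$ to $x'$; the two-way bridge supplies the reverse, and the mirror construction supplies left $m$-bridges. Since $m$ is arbitrary this yields $x\sim^rx'$ and $x\sim^lx'$, i.e.\ $(5)\Rightarrow(2)$, which closes the cycle and incidentally makes the citation for $(3)\Rightarrow(1)$ unnecessary. (Proving $(1)\Rightarrow(2)$ directly instead is also possible but costs a compactness/uniformity argument, since a single interpolating point is an $m$-bridge only for $m$ below the coordinate where it detaches from $x$.)
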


\subsection{Gibbs measures}

Let $X$ be a mixing shift of finite type over an alphabet $\mathcal{A}$.

\begin{definition}\label{defn::markov_measure}
  A $\sigma$-invariant measure $\mu$ on $X$ is called a {\em 1-step Markov} measure
  if there are a {\em initial} probability vector $p$ on $\mathcal{A}$ and an $\mathcal{A}\times\mathcal{A}$ stochastic matrix $P$ such that
  \[
     \mu[a_1a_2\cdots a_n]_0=p_{a_1}P_{a_1,a_2}P_{a_2,a_3}\cdots P_{a_{n-1},a_n}
  \]
  for all $a_1\cdots a_n$ in $\mathcal{B}(X)$.
  A measure on $X$ is called a {\em Markov} measure if it is conjugate to a 1-step Markov measure on a shift of finite type.
\end{definition}

Denote by $S_0^{n-1}f(x)=\sum_{j=0}^{n-1}f(\sigma^jx)$.
  
\begin{definition}\label{defn::gibbs_measure}
  Let $f$ be in $C(X)$.
  A $\sigma$-invariant measure $\mu$ on $X$ is called a {\em Gibbs} measure for $f$ if there are $K_1,K_2>0$ and a real number $P$ such that
  \begin{equation}\label{eqn::gibbs_inequality}
     K_1<\frac{\mu[x|_{[0,n-1]}]}{\exp(-nP+S_0^{n-1}f(x))}<K_2 
  \end{equation}
  for all $x$ in $X$ and $n$ in $\mathbb{N}$,
  where $f$ is called a {\em potential} of $\mu$ with {\em Gibbs constants} $K_1,K_2$.
  If $P=0$ in particular, then $f$ is called a {\em normalized potential} of $\mu$.
\end{definition}

Markov measures are Gibbs measures for locally constant potentials.
Also, we can easily verify that it is invariant under conjugacy whether the given measure is Gibbs or not.

If $\mu$ is a Gibbs measure for a potential $f$, then subtracting $P$ from $f$ we can always assume a normalized potential.
For the convenience, all the potentials in this paper are assumed normalized.

\section{Factor maps which admit transitions between right classes}

% To prove Theorem \ref{thm::necessity},
We start by showing that factor maps does not preserve Gibbsian property if it admits a right transition between distinct right classes over a periodic point.
Throughout the section $\pi$ is always a 1-block factor map from a two-sided 1-step mixing shift of finite type $X$ over $\mathcal{A}$ onto a sofic shift $Y$ over $\mathcal{A}$.
The following example exhibits a typical behaviour of such ones.

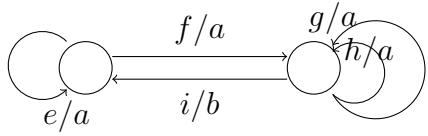
\begin{figure}
  \begin{tikzpicture}[->]
    \node[circle,draw,inner sep=7pt] (I) at (0,1) {};
	\node[circle,draw,inner sep=7pt] (J) at (3,1) {};
	      
	\draw (0.35,1.15) to (1.5,1.15) node[above] {$f/a$} to (2.65,1.15) ;
	\draw (2.65,0.85) to (1.5,0.85) node[below] {$i/b$} to (0.35,0.85) ;
	\draw (3.25,0.65) arc (225:495:0.4) node[right] {$h/a$};
	\draw (3.25,0.65) arc (210:510:0.65) node[above] {$g/a$};
	\draw (-0.25,1.35) arc (45:315:0.45) node[below] {$e/a$};
  \end{tikzpicture}
  \caption{There is a transition between two classes}\label{fig::transitional_cases}
\end{figure}
        
\begin{example}\label{eg::transitional_cases}
  The labelling $\pi$ given in Figure \ref{fig::transitional_cases} admits a transition between two right classes over a fixed point $a^\infty$.
  Define a Markov measure $\mu$ on $X$ putting transition probabilities $1/2$ on $e,f$ and $i$ and $1/4$ on each of $g$ and $h$.
  Set $\nu=\pi\mu$.
  Then
  \[ \nu[a^n]_0=\mu[e^n]_0+\sum_{m=0}^{n-1}\mu[e^mf(g\wedge f)^{n-m-1}]_0+\mu[(g\wedge f)^n]_0=(2+n)2^{-n}. \]
  On the other hand, for $f:X\to\mathbb{R}$ we have $\exp S_nf(a^\infty)=(\exp f(a^\infty))^n$.
  Since $n2^{-n}/(\exp f(a^\infty))^n$ goes either to 0 or $\infty$ for any $f$, $\nu$ never is a Gibbs measure.
\end{example}

As in the previous example, we will find points of $Y$
where the transformed measures violate the inequality (\ref{eqn::gibbs_inequality}) for any positive real constant and continuous function.
Such points are found among periodic points
as they behave more regularly and are easier to compute and control the complexity of their transition classes.
The desired periodic points and measures on their fibers are given by Lemma \ref{lmm::complexity_of_periodic_class_realized}.

First, we compute the approximate growth rate of the complexity of the transition classes of periodic points.
  
\begin{definition}\label{defn::marked_and_transient_blocks}
  Let $y$ be in $Y$, $I$ an interval in $\mathbb{Z}$, $w$ in $\pi^{-1}(y)|_I$ and $C$ a right class over $y$.
  We say $w$ to be {\em marked} in $I$ by $C$ and let $w$ belong to the set $\mathsf{mk}_I(C)$ if
  \[ \{C'\in\llbracket y\rrbracket^r\mid w\in C'|_I\}=\{C'\in\llbracket y\rrbracket^r\mid C\to C'\}. \]
  The blocks in $\pi^{-1}(y)|_I$ but not in $\bigcup_{C}\mathsf{mk}_I(C)$ are said to be {\em transient} through $I$.
  For the convenience, let $\mathsf{mk}_n[x]^r:=\mathsf{mk}_{[n,n]}[x]^r$ for $n$ in $\mathbb{Z}$.
\end{definition}
  
\begin{remark}\label{rmk::marked_blocks}
  It is easy to see that if $x|_I$ is in $\mathsf{mk}_I[x]^r$ and $J$ is an interval in the right of $I$ then $x|_J$ is in $\mathsf{mk}_J[x]^r$
  and that any block containing a marked block as a suffix is marked.
  Also, any block containing a marked block by $C$ as a prefix is marked by $C$ as well if it appears in $C$. 
\end{remark}

With the notion of marked and transient blocks, define a notion of the length of bridges in nonstop transition.

\begin{lemma}\cite[Lemma 4.18]{AllQ13}\label{lmm::marked_in_point}
  Let $x$ be in $X$.
  There is $n$ in $\mathbb{Z}$ such that $x|_n$ is in $\mathsf{mk}_n[x]^r$.
\end{lemma}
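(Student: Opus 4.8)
I would begin by fixing notation. Set $y=\pi(x)$ and, for $n\in\mathbb Z$,
\[
  S_n=\{C'\in\llbracket y\rrbracket^r\mid x|_n\in C'|_n\},\qquad
  T=\{C'\in\llbracket y\rrbracket^r\mid [x]^r\to^rC'\};
\]
by Definition~\ref{defn::marked_and_transient_blocks}, the assertion $x|_n\in\mathsf{mk}_n[x]^r$ is exactly the equality $S_n=T$. The plan is to prove the three inclusions
\[
  T\subseteq S_n\text{ for all }n\in\mathbb Z,\qquad
  S_{n+1}\subseteq S_n\text{ for all }n\in\mathbb Z,\qquad
  \bigcap_{n\in\mathbb Z}S_n\subseteq T,
\]
and then to invoke the finiteness of $\llbracket y\rrbracket^r$: a decreasing sequence of subsets of a finite set is eventually constant, and by the first and third inclusions its eventual value is $\bigcap_{n}S_n=T$, so $S_n=T$ for all large $n$.

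The three inclusions rest on two elementary observations that I would record first. \emph{Gluing:} since $X$ is $1$-step and $\pi$ is a $1$-block map, whenever $p,q\in\pi^{-1}(y)$ satisfy $p|_n=q|_n$, the bi-infinite point that agrees with $p$ on $(-\infty,n]$ and with $q$ on $[n,\infty)$ again belongs to $\pi^{-1}(y)$. \emph{Asymptotic preimages are right equivalent:} if $p,q\in\pi^{-1}(y)$ agree on some right tail $[N,\infty)$, then $p\sim^rq$, because for every $m$ the point $p$ itself is an $m$-bridge from $p$ to $q$ (it agrees with $p$ on $(-\infty,m]$ and with $q$ on $[\max(m+1,N),\infty)$), and symmetrically; in particular, any preimage of $y$ that agrees with a point of a right class $C'$ on a right tail already lies in $C'$. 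I would also take from \cite{AllQ13} the standard facts that $\to^r$ is transitive and descends to a relation on classes, so that ``$[x]^r\to^rC'$'' means ``$x\to^rx'$ for some $x'\in C'$''.

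With these in hand, the three inclusions become short. For $T\subseteq S_n$: if $x\to^rx'$ with $x'\in C'$, an $n$-bridge $\vec x$ from $x$ to $x'$ satisfies $\vec x|_n=x|_n$ and agrees with $x'$ on a right tail, so $\vec x\in C'$ and hence $x|_n\in C'|_n$. For $S_{n+1}\subseteq S_n$: if $C'\in S_{n+1}$, a witness $x'\in C'$ has $x'|_{n+1}=x|_{n+1}$, and gluing $x$ to $x'$ at position $n+1$ yields a point of $C'$ that agrees with $x$ at position $n$. For $\bigcap_nS_n\subseteq T$: if $C^*$ lies in every $S_k$, then for each $k$ gluing $x$ to a witness at position $k$ produces $z_k\in C^*$ with $z_k|_{(-\infty,k]}=x|_{(-\infty,k]}$; fixing $x^*:=z_0\in C^*$, the point $z_0$ is an $m$-bridge from $x$ to $x^*$ whenever $m\le0$, while for $m>0$ an $m$-bridge from $z_m$ to $x^*$ (which exists since $z_m\sim^rx^*$) agrees with $x$ on $(-\infty,m]$ (because $z_m$ does) and with $x^*$ on a right tail, hence is an $m$-bridge from $x$ to $x^*$; so $x\to^rx^*$ and $C^*\in T$.

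The step I expect to carry the real weight is the finiteness of $\llbracket y\rrbracket^r$, i.e.\ that $y$ has only finitely many right classes; this is precisely what converts the limiting identity $\bigcap_nS_n=T$ into the existence of a single index $n$ with $S_n=T$, and it is a basic property of transition classes from \cite{AllQ13}. Everything else is bookkeeping with bridges and $1$-step gluing; the one mildly delicate point there is that a right transition needs a \emph{fixed} target even though the crossover position varies, which is why in the third inclusion all bridges are routed through the single representative $x^*=z_0$.
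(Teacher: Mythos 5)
Your argument is correct and complete: the reduction to the three inclusions $T\subseteq S_n$, $S_{n+1}\subseteq S_n$, $\bigcap_nS_n\subseteq T$, followed by stabilization of a decreasing sequence of subsets of the finite set $\llbracket y\rrbracket^r$, is exactly the natural proof, and the gluing and asymptotic-equivalence observations you use are valid here because $X$ is $1$-step and $\pi$ is $1$-block. Note that the paper itself gives no proof of this statement --- it is imported verbatim as \cite[Lemma 4.18]{AllQ13} --- so there is nothing to diverge from; the only external input you rely on, the finiteness of $\llbracket y\rrbracket^r$, is itself elementary (two preimages agreeing at infinitely many positive coordinates are right equivalent by gluing, so distinct classes eventually occupy distinct symbols and their number is at most $|\mathcal{A}|$) and not circular.
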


Let $\mathsf{mk}(x)$ denote the smallest $n$ with $x|_n$ in $\mathsf{mk}_n[x]^r$.
Allow it to take $-\infty$ as its value.
  
\begin{remark}\label{rmk::depths_and_degree_and_marked_blokcs}
  The proof of Theorem \ref{thm::depths_and_degree} reveals
  that for any $x_i$ in distinct right classes $C_i$, $1\le i\le d=d_\pi^r(y)$, over $y$ and $M=\max_i\mathsf{mk}(x_i)$ there are infinitely many $N>M$ with $\pi^{-1}(y)|_{[M,N]}$ partitioned into $d$ tangled subsets.
  It is not explicitly stated but implied in the proof of \cite[Theorem 4.22]{AllQ13}, through stages 1 to 3.
\end{remark}

\begin{definition}\label{defn::bridge_length}
  Let $y$ be in $Y$ with nonstop transition $C\to^rC'$ between two right classes $C,C'$over it.
  Given $x$ in $C$, $x'$ in $C'$ and a bridge $\vec{x}$ from $x$ to $x'$ set
  \[ r_x(\vec{x})=\max\{i\in\mathbb{Z}\mid x|_{(-\infty,i]}=\vec{x}|_{(-\infty,i]}\}. \]
  and $r_C(\vec{x})=\max_{x\in C}r_x(\vec{x})$.
  We define the {\em length} $\ell(\vec{x})$ of $\vec{x}$ by $\ell(\vec{x})=\mathsf{mk}(\vec{x})-r_C(\vec{x})$.
  The {\em length} of $C\to^rC'$ is defined to be the supremum of $\ell(\vec{x})$ over all bridges $\vec{x}$ from $C$ to $C'$.
\end{definition}

From the definition it is immediate that for any bridge $\vec{x}$ between distinct classes $\ell(\vec{x})$ is positive and finite.

\begin{lemma}\label{lmm::bridge_length}
  If $y$ is periodic, then the length of any nonstop transition $C\to^rC'$ over $y$ is finite.
  That is, there is a uniform upper bound on $\ell({\vec{x}})$ where $\vec{x}$ varies over all bridges from $C$ to $C'$.
\end{lemma}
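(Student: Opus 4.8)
The plan is to argue by contradiction: assuming $C\to^rC'$ has infinite length, I will produce a point of $X$ no symbol of which is ever marked by its own right class, contradicting Lemma~\ref{lmm::marked_in_point}. First I would normalise. Since $\llbracket y\rrbracket^r$ is finite and a power of $\sigma$ permutes its members, choose a period $p$ of $y$ for which $\sigma^p$ fixes every right class over $y$; then for a bridge $\vec x$ from $C$ to $C'$ and $j\in\mathbb Z$ the point $\sigma^{pj}\vec x$ is again a bridge from $C$ to $C'$ with $r_C$ and $\mathsf{mk}$ each lowered by $pj$, so $\ell$ is invariant under $\sigma^{p\mathbb Z}$, and the set $\{C''\in\llbracket y\rrbracket^r\mid x|_s\in C''|_s\}$ depends only on the symbol $x|_s$ and on $s\bmod p$. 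Taking bridges $\vec x^{(k)}$ from $x_k\in C$ to $x'_k\in C'$ with $\ell(\vec x^{(k)})\to\infty$, I would shift each by a suitable power of $\sigma^p$ so that $r_C(\vec x^{(k)})\in[0,p)$ and then, passing to a subsequence, assume $r_C(\vec x^{(k)})=r$ is constant and $\vec x^{(k)}\to\vec z$ in $X$; note $\mathsf{mk}(\vec x^{(k)})=r+\ell(\vec x^{(k)})\to\infty$.

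Next I would record two facts about $\vec z$. (a) For every $s$ the symbol $\vec z|_s$ is not marked by $C'$: for large $k$ one has $\vec z|_s=\vec x^{(k)}|_s$ and $s<\mathsf{mk}(\vec x^{(k)})$, hence $\vec x^{(k)}|_s$ is not marked by $[\vec x^{(k)}]^r=C'$, and whether a symbol is marked by a fixed class is determined by the symbol and $s\bmod p$. (b) $[\vec z]^r\to^rC'$: fix $x'\in C'$ and $m\in\mathbb Z$, pick $k$ so large that $\vec x^{(k)}|_m=\vec z|_m$, take an $m$-bridge $\vec u$ from $\vec x^{(k)}$ to $x'$, and splice $\vec z|_{(-\infty,m]}$ onto $\vec u|_{[m+1,\infty)}$; since $X$ is $1$-step the result is a preimage of $y$, and it is an $m$-bridge from $\vec z$ to $x'$. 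Combining (b) with Definition~\ref{defn::nonstop_transition} gives $[\vec z]^r\in\{C,C'\}$.

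If $[\vec z]^r=C'$, then (a) says no symbol of $\vec z$ is marked by $[\vec z]^r$, contradicting Lemma~\ref{lmm::marked_in_point}. If $[\vec z]^r=C$, put $M=\mathsf{mk}(\vec z)$. By the argument of (a), $C'$ lies in every set $\{C''\mid\vec z|_s\in C''|_s\}$, so by the nonstop hypothesis that set equals $\{C''\mid D\to^rC''\}$ for a right class $D$ only if $D\in\{C,C'\}$; together with (a) and with $s<M=\mathsf{mk}(\vec z)$ this forces $\vec z|_s$ to be transient for every $s<M$. As there are infinitely many such $s$ but only finitely many symbols, there are $s_1<s_2<M$ with $s_1\equiv s_2\pmod p$ and $\vec z|_{s_1}=\vec z|_{s_2}$; repeating the loop $\vec z|_{[s_1,s_2]}$ bi-infinitely produces a preimage of $y$ every symbol of which is transient, hence a point no symbol of which is marked by its right class, again contradicting Lemma~\ref{lmm::marked_in_point}. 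I expect this second case to be the crux: a limit of bridges lying in $C'$ need not lie in $C'$, and since being marked is relative to a point's own right class, the contradiction must be manufactured by pumping a transient loop rather than by observing that the limit is unmarked.
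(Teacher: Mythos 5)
Your strategy is sound and genuinely different from the paper's: the paper takes essentially the same limit point but argues that it generates an intermediate class $\vec C$ with $C\to^r\vec C\to^rC'$ and $\vec C\notin\{C,C'\}$, contradicting nonstopness directly, whereas you use nonstopness to pin $[\vec z]^r$ into $\{C,C'\}$ and then contradict Lemma~\ref{lmm::marked_in_point} in each case. Your facts (a) and (b) are correct --- the splice in (b) is legitimate because $X$ is $1$-step and $\pi$ is $1$-block --- and the pumping of a transient loop is valid, since with $\sigma^p$ fixing every right class the transience of a symbol depends only on the symbol and on its position modulo $p$.

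There is, however, a genuine gap in the case $[\vec z]^r=C$. You write ``as there are infinitely many such $s$,'' which presupposes $M=\mathsf{mk}(\vec z)>-\infty$; but the paper explicitly allows $\mathsf{mk}(\cdot)=-\infty$ (this happens, for instance, at every point when $\pi$ is fiber-mixing). If every symbol of $\vec z$ is marked by $C$, your argument produces no transient symbol to pump, and nothing you have established excludes this: since $C'\not\to^rC$ (otherwise $C\sim^rC'$), being marked by $C$ and being marked by $C'$ are different conditions, so ``all symbols marked by $C$'' is compatible with (a), with (b), and with Lemma~\ref{lmm::marked_in_point} applied to $\vec z$. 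Note that you normalised $r_C(\vec x^{(k)})=r$ but never used the maximality in the definition of $r_C$; that is exactly what closes the hole. If $\vec z|_{r+1}\in C|_{r+1}$, take $k$ large with $\vec x^{(k)}|_{r+1}=\vec z|_{r+1}$, choose $z\in C$ with $z|_{(-\infty,r]}=\vec x^{(k)}|_{(-\infty,r]}$ and $w\in C$ with $w|_{r+1}=\vec x^{(k)}|_{r+1}$; since $X$ is $1$-step, the point agreeing with $z$ on $(-\infty,r]$ and with $w$ on $[r+1,\infty)$ is a preimage of $y$, is right asymptotic to $w$ and hence lies in $C$, and agrees with $\vec x^{(k)}$ on $(-\infty,r+1]$, forcing $r_C(\vec x^{(k)})\ge r+1$, a contradiction. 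Therefore $\vec z|_{r+1}\notin C|_{r+1}$, so $\vec z|_{r+1}$ is marked by no class, whence $M\ge r+2$ and the set of transient positions $s<M$ is indeed infinite. With this patch your proof is complete.
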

\begin{proof}
  Suppose that the claim does not hold.
  For each $n>0$ there are $x_n$ in $C$, $x'_n$ in $C'$ and a bridge $\vec{x}_n$ from $x_n$ to $x'_n$ with $r_{x_n}(\vec{x}_n)=r(\vec{x}_n)$ and increasing $\ell(\vec{x}_n)>n$.
  As $y$ is periodic we may choose periodic $x'_n$, applying Theorem \ref{thm::depths_and_degree}.
  Note that we can take $x'_n$ whose smallest period is less than or equal to thrice the length of a word of depth $d_\pi^r(y)$ appearing in $y$.
  Such periodic points of bounded periods are finitely many,
  so we have $x'$ in $C'$ such that for each $n>0$ there are $x_n$ in $C$ and a bridge $\vec{x}_n$ from $x_n$ to $x'$ with $r_{x_n}(\vec{x}_n)=r(\vec{x}_n)$ and nondecreasing $\ell(\vec{x}_n)>n$.
  
  Refine the selection of $n$ so that all the $r_n=r_{x_n}(\vec{x}_n)$ have the same residue modulo a period of $x'$ for infinitely many $n$.
  Shift all those selected $x_n,x',\vec{x}_n$ and $y$ so that we have $r_n=0$ and $\ell(\vec{x}_n)=\mathsf{mk}(\vec{x}_n)$.

  Set $I_n=[1,\mathsf{mk}(\vec{x}_n)-1]$.
  If there is a bridge from $x'|_{I_n}$ to $\vec{x}_n|_{I_n}$,
  then by Remark \ref{rmk::marked_blocks} $\vec{x}_n|_{\mathsf{mk}(\vec{x}_n)-1}$ is marked by $C'$ at $\mathsf{mk}(\vec{x}_n)-1$.
  It violates the definition of $\mathsf{mk}(\cdot)$.
  On the other hand if there is a bridge from $\vec{x}_n|_{I_n}$ to $x_n|_{I_n}$, then it implies $\vec{x}_n|_1$ appears in $C|_1$.
  This time, a contradiction on $r(\cdot)$ is derived.
  
  Hence there is neither a bridge from $x'|_{I_n}$ to $\vec{x}_n|_{I_n}$ nor a bridge from $\vec{x}_n|_{I_n}$ to $x_n|_{I_n}$ for each selected $n$.
  By compactness, we get limit points $x$ and $\vec{x}$ of those $x_n$'s and $\vec{x}_n$'s, respectively,
  such that $\vec{x}$ is left asymptotic to $x$ but admits no $1$-bridge from $x'$ nor to $x$.
  This $\vec{x}$ is not equivalent to $x$ nor to $x'$,
  however $C\to\vec{C}=[\vec{x}]^r$ and $\vec{C}\to C'$ since $\vec{x}$ is a limit point of $\{\vec{x}_n\}_n$ and $x\to\vec{x}_n$, $\vec{x}_n\to x'$ for each selected $n$.
  Since $C\to^rC'$ is nonstop, $\vec{C}$ clearly may not exist and a contradiction is induced.
\end{proof}

Consider a notion of period for a transition class in a natural way.
A transition class $C$ has {\em period} $p>0$ if $\sigma^p(C)=C$.
When $p=1$, $C$ is called {\em fixed}.
Every transition class over a periodic point is also periodic.

\begin{lemma}\label{lmm::complexity_of_periodic_class}
  Let $y$ be a periodic point in $Y$ and $C$ a right class over $y$ with period $p$.
  Let $C^*$ be a subset of $C$ the language of which consists only of marked blocks in $C$.
  For each fully supported Markov measure $\mu$ on $X$,
  we have $0<\lambda<1$ and $K>0$ with
  \[ \lim_n\frac{\mu[C^*|_{[0,pn-1]}]_0}{\lambda^n}=K. \]
\end{lemma}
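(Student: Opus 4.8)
The plan is to realise $\mu[C^*|_{[0,pn-1]}]_0$ as $v^{\top}M^{n}u$ for a fixed nonnegative matrix $M$ and fixed nonnegative vectors $u,v$, and then to read the asymptotics off the Perron--Frobenius theorem. Since the Markov/Gibbs property and all the transition-class data are invariant under topological conjugacy, I may and will assume $\mu$ is a fully supported $1$-step Markov measure with initial vector $p$ and stochastic matrix $P$, so $\mu[a_1\cdots a_k]_0=p_{a_1}P_{a_1,a_2}\cdots P_{a_{k-1},a_k}$. Write $q$ for the least period of $y$; since $\sigma^{p}C=C$ forces $\sigma^{p}y=y$, we have $q\mid p$.

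First I would build a finite labelled graph $G$ whose length-$pn$ paths spell exactly the words of $C^*|_{[0,pn-1]}$. A vertex records two pieces of data: the current coordinate modulo $q$ (finitely many values, as $y$ is periodic), and, for a word $u$ read so far with $\pi(u)$ a subword of $y$, the set $\mathcal C(u)=\{C'\in\llbracket y\rrbracket^{r}\mid u\in C'|_{\bullet}\}$ of right classes in whose windows $u$ occurs. There are finitely many such sets because $|\llbracket y\rrbracket^{r}|<\infty$, and $\mathcal C(\cdot)$ evolves in a finite-state way: it only shrinks under right extension, and by Remark \ref{rmk::marked_blocks} together with Lemma \ref{lmm::marked_in_point} it stabilises, for a point of the class $C$, at the value $\{C'\mid C\to^{r}C'\}$ exactly when the window becomes marked in $C$. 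Hence a word lies in $C^*|_{[0,pn-1]}$ iff it extends, on both sides, to a point of $C^*$; the right extension is governed by $\mathcal C(\cdot)$ reaching and staying at $\{C'\mid C\to^{r}C'\}$ (Lemma \ref{lmm::marked_in_point}), and the left extension by the same data read backwards, using that a block containing a marked block as a suffix is again marked (Remark \ref{rmk::marked_blocks}). Both are finite-state conditions, so such a $G$ exists. Weighting the edge from a vertex at coordinate $j$ carrying symbol $a$ to a vertex at coordinate $j'$ carrying symbol $b$ by $P_{a,b}$, and collapsing $p$ consecutive steps into one macro-step, produces a nonnegative $|V(G)|\times|V(G)|$ matrix $M$ with $\mu[C^*|_{[0,pn-1]}]_0=v^{\top}M^{n}u$, where $u$ absorbs the initial weights $p_a$ at admissible start vertices and $v$ is the ``terminate anywhere'' all-ones vector (a bounded edge correction at the two ends is harmless for the asymptotics, or may be folded into $u,v$).

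It remains to analyse $M$. I claim $M$ has a single dominant irreducible component and that it is primitive. Strong connectivity of that component says any two marked windows of $C$ can be placed inside a common point of $C^*$ at separation a multiple of $p$; this is exactly where one uses that $C$ is a \emph{single} right transition class (so marked windows are mutually reachable) together with the propagation properties of marked blocks. Aperiodicity is where the hypothesis that $p$ is \emph{exactly} the period of $C$ enters: the gcd of the macro-cycle lengths is $1$ precisely because $C$ admits no period properly dividing $p$. Granting this, Perron--Frobenius gives $v^{\top}M^{n}u=K\lambda^{n}+O\!\big((\lambda')^{n}\big)$ with $\lambda$ the spectral radius of $M$, $\lambda'<\lambda$, and $K=(v^{\top}w)(\ell^{\top}u)>0$ for the Perron eigenvectors $w,\ell$, as soon as $u,v$ are not orthogonal to the dominant component; they are not, since $C^*\ne\emptyset$ — apply Lemma \ref{lmm::marked_in_point} to a periodic point of $C$ (which exists by Theorem \ref{thm::depths_and_degree}) to get a periodic point all of whose blocks are marked in $C$. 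Finally $\lambda>0$ since $G$ is nonempty, and $\lambda<1$ because $G$ is a proper sub-labelled-graph of the $\mu$-weighted graph of $X$ — proper since $C^*\subseteq\pi^{-1}(y)$ while $Y$, being a nontrivial mixing sofic shift, is infinite — and the $\mu$-weighted graph of $X$ carries the stochastic matrix $P$, of spectral radius $1$, so primitivity of $M$ forces the strict inequality. This gives $0<\lambda<1$ and $\lim_{n}\mu[C^*|_{[0,pn-1]}]_0/\lambda^{n}=K$.

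The main obstacle is the primitivity of the dominant component of $M$, i.e.\ the strong connectivity and aperiodicity of the ``marked windows of $C$'' at scale $p$; this is the one place where the precise value of the period $p$ and the fact that $C$ is a single transition class are used essentially. A secondary technical point is making the finite-state description of $C^*|_{[0,pn-1]}$, especially of the left-infinite extension, fully rigorous.
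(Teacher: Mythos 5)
Your proposal follows essentially the same route as the paper's proof: encode the marked sub-language $C^*$ by a nonnegative transfer matrix weighted by the Markov transition probabilities, establish primitivity of its dominant part, and read the asymptotics $K\lambda^n$ off the Perron--Frobenius theorem, with $\lambda<1$ forced by full support of $\mu$ relative to the strictly larger stochastic graph of $X$. The paper indexes its matrix directly by the symbol sets $C^*|_0,\dots,C^*|_{p-1}$ and gets irreducibility from the tangledness of $C^*|_{[0,pn]}$ rather than from your automaton of class-membership sets, but the decomposition, the key Perron--Frobenius step, and the source of $0<\lambda<1$ coincide.
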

\begin{proof}
  Every sufficiently long word which appears in $y$ has depth $d_\pi^r(y)$.
  As $C^*$ is obtained from $C$ by performing a $\sigma$-invariant operation, that is, forbidding transient blocks everywhere,
  we have $\sigma^pC^*=C^*$ as well.
  Remark \ref{rmk::marked_blocks} implies that $C^*$ is nonempty.
%   We may take $p$ so that $C^*|_{[0,p]}$ is also tangled.
  
  Let $\mu$ be an arbitrary fully supported 1-step Markov measure on $X$.
  Let $\mathsf{A}$ the $\mathcal{A}\times\mathcal{A}$ matrix given by $\mathsf{A}_{ab}=\mu[ab]/\mu[a]$ for $a$ and $b$ in $\mathcal{A}$.
  Let $\mathcal{A}^*_n=C^*|_n$ and $\mathsf{A}^*$ be the $\mathcal{A}^*_0\times\mathcal{A}^*_0$ matrix given by
  \[ \mathsf{A}^*_{ab}=\sum_{a_0=a,a_p=b,a_i\in\mathcal{A}^*_i,1\le i\le p-1}\prod_{j=0}^{p-1}A_{a_ja_{j+1}} \]
  for $a$ and $b$ in $\mathcal{A}^*_0$.
  Then $(A^*)^n_{ab}$ is the sum of all the probabilities
   of the paths of length $pn$ from $a$ to $b$ up to $\mu$.
  As $\mu$ is fully supported and $C^*|_{[0,pn]}$ is tangeld for some $n>0$, $\mathsf{A}^*$ is primitive and has a unique Perron eigenvalue $\lambda$.
  So $\lim_n(A^*)^n_{ij}/\lambda^n=v^*_iw^*_j$
  where $v^*$ and $w^*$ are the normalized right and left Perron eigenvectors of $A^*$, respectively.

  As $X$ is mixing, for a left Perron eigenvector $\mathsf{w}$ of $\mathsf{A}$ with $\sum_{a\in\mathcal{A}}\mathsf{w}_a=1$ we have 
  \[ \mu[C^*|_{[0,pn-1]}]_0=\sum_{b\in\mathcal{A}^*_0}\sum_{a\in\mathcal{A}^*_0}\mathsf{w}_a(\mathsf{A}^*)^n_{ab}=\sum_{b\in\mathcal{A}^*_0}(\mathsf{w}_a)_{a\in\mathcal{A}^*_0}(A^*)^n. \]
  Finally, $(\mathsf{w}_a)_{a\in\mathcal{A}^*_0}$ is a linear combination of eigenvectors of $\mathsf{A}^*$,
  thus $\mu[C^*|_{[0,pn-1]}]_0$ grows approximately in the ratio of $\lambda$.
\end{proof}

\begin{lemma}\label{lmm::complexity_of_periodic_class_realized}
  Let $y$ be a periodic point in $Y$ and $C$ a right class over $y$. Let $C^*$ be a subset of $C$ the language of which consists only of marked blocks in $C$.
  Then we have $p$ in $\mathbb{N}$ and $0<\Lambda<1$ such that given $\Lambda\le\lambda<1$ there are a fully supported Markov measure $\mu$ on $X$ and $K>0$ with
  \[ \lim_n\frac{\mu[C^*|_{[0,pn-1]}]_0}{\lambda^n}=K. \]
\end{lemma}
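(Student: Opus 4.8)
The plan is to realize the prescribed rate $\lambda$ as the Perron eigenvalue of the transfer matrix $\mathsf{A}^*$ that already appears in the proof of Lemma \ref{lmm::complexity_of_periodic_class}, by deforming the Markov measure and applying the intermediate value theorem. Recall from that proof that for a fully supported $1$-step Markov measure $\mu$ on $X$ with transition matrix $\mathsf{A}$, and with $p$ a period of $C$ (equivalently of $C^*$, since forbidding transient blocks is a $\sigma^p$-invariant operation), the primitive $\mathcal{A}^*_0\times\mathcal{A}^*_0$ matrix $\mathsf{A}^*$ assembled from $\mathsf{A}$ satisfies $\lim_n\mu[C^*|_{[0,pn-1]}]_0/\lambda(\mu)^n=K(\mu)$ with $K(\mu)>0$, where $\lambda(\mu)\in(0,1)$ is the Perron eigenvalue of $\mathsf{A}^*$ (the constant $K(\mu)$ being the pairing of the positive restricted left Perron vector of $\mathsf A$ with the positive Perron vectors of $\mathsf{A}^*$). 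Hence it suffices to exhibit, for a suitable $\Lambda\in(0,1)$, fully supported $1$-step Markov measures realizing every value of $[\Lambda,1)$ as $\lambda(\mu)$; the $p$ in the statement will be this period.

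Next I would fix a fully supported $1$-step Markov measure $\mu_1$ on $X$ as a reference and, for $t\in(0,1]$, define $\mu_t$ to be the fully supported $1$-step Markov measure whose transition matrix $\mathsf{A}_t$ is obtained from $\mathsf{A}_1$ by scaling by $t$ every entry coming from a transition that does not occur inside $C^*$ (at the relevant residue modulo $p$) and then renormalizing each row to be stochastic. To make ``at the relevant residue'' meaningful for a homogeneous Markov measure I would first pass to a conjugate $1$-step presentation of $X$ in which the marked restriction defining $C^*$ is itself a subshift-of-finite-type constraint — and, if necessary, shrink $C^*$ to a subshift whose position-alphabets $C^*|_0,\dots,C^*|_{p-1}$ are pairwise disjoint — so that $\mathsf{A}^*(\mu_t)$ becomes a fixed combinatorial object weighted by $\mathsf{A}_t$. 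For every $t\in(0,1]$ the matrix $\mathsf{A}^*(\mu_t)$ is primitive with a zero/non-zero pattern independent of $t$, so $t\mapsto\lambda(\mu_t)$ is continuous on $(0,1]$, the Perron eigenvalue being a simple root of the characteristic polynomial. Set $\Lambda:=\lambda(\mu_1)$, which is $<1$ by Lemma \ref{lmm::complexity_of_periodic_class}.

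The crux is to show $\lim_{t\to 0^+}\lambda(\mu_t)=1$. As $t\to 0$ the $\mu_t$-mass leaving $C^*$ from any symbol occurring in $C^*$ tends to $0$, and since $C^*$ is nonempty and $\sigma^p$-invariant (Remark \ref{rmk::marked_blocks}) every such symbol admits a continuation staying inside $C^*$; hence the limit of $\mathsf{A}^*(\mu_t)$ is a genuinely stochastic matrix and $\lambda(\mu_t)\to 1$ (e.g.\ using $\min_i(\text{row sum})\le\lambda\le\max_i(\text{row sum})$ together with continuity of the Perron eigenvalue). By the intermediate value theorem $\{\lambda(\mu_t):t\in(0,1]\}\supseteq[\Lambda,1)$, so given $\lambda\in[\Lambda,1)$ we pick $t$ with $\lambda(\mu_t)=\lambda$ and take $\mu=\mu_t$ and $K=K(\mu_t)$ from Lemma \ref{lmm::complexity_of_periodic_class}. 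The main obstacle is exactly the bookkeeping in the middle step: choosing a presentation (and a version of $C^*$) in which the deformed transfer matrix converges to a stochastic matrix as $t\to 0$, so that rates arbitrarily close to $1$ are attainable; once that is arranged, the continuity and intermediate-value argument is routine.
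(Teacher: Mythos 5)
Your proposal is correct and follows essentially the same route as the paper: realize $\lambda$ as the Perron eigenvalue of the restricted transfer matrix $\mathsf{A}^*$, deform the fully supported Markov measure toward one concentrated on $C^*$ so that this eigenvalue tends to $1$, and conclude by continuity of the simple Perron root and the intermediate value theorem. Your explicit one-parameter family $\mu_t$ (and the row-sum bound showing $\lambda(\mu_t)\to 1$) merely makes precise the paper's ``change the entries of $\mathsf{A}$ smoothly'' step, so no further comparison is needed.
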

\begin{proof}
  The proof starts with the same idea and setting as in the previous lemma.
  Assume without loss of generlity that $X$ is not trivial.
  Let $\mu$ be an arbitrary fully supported 1-step Markov measure on $X$ and $\mathsf{A}$ the $\mathcal{A}\times\mathcal{A}$ matrix given by $\mathsf{A}_{ab}=\mu[ab]/\mu[a]$ for $a$ and $b$ in $\mathcal{A}$.
  Let $\mathcal{A}^*_n=C^*|_n$ and $\mathsf{A}^*$ be the $\mathcal{A}^*_0\times\mathcal{A}^*_0$ matrix given by
  \[ \mathsf{A}^*_{ab}=\sum_{a_0=a,a_p=b,a_i\in\mathcal{A}^*_i,1\le i\le p-1}\prod_{j=0}^{p-1}A_{a_ja_{j+1}} \]
  for $a$ and $b$ in $\mathcal{A}^*_0$.  
  As in Lemma \ref{lmm::complexity_of_periodic_class}, $\mathsf{A}^*$ is primitive and has a unique Perron eigenvalue $\lambda^*$.
  If $\mu$ is concentrated on the orbit of some periodic point in $C^*$, then $\lambda^*=1$ and $\mu[C^*|_{[0,pn-1]}]_0$ is a constant.
  However, as $X$ is nontrivial, there is a periodic point in $X\setminus C^*$
  and $\lambda^*$ must be smaller than 1 for $\mu$ to be fully supported.
  For a left Perron eigenvector $\mathsf{w}$ of $\mathsf{A}$ with $\sum_{a\in\mathcal{A}}\mathsf{w}_a=1$ we have 
  \[ \mu[C^*|_{[0,pn-1]}]_0=\sum_{b\in\mathcal{A}^*_0}\sum_{a\in\mathcal{A}^*_0}\mathsf{w}_a(\mathsf{A}^*)^p_{ab}=\sum_{b\in\mathcal{A}^*_0}(\mathsf{w}_a)_{a\in\mathcal{A}^*_0}(A^*)^p. \]  
    
  Set $m:=\inf\{\lambda^*\mid\mu\text{ is fully supported}\}<1$.
  Imagine that we change the entries of $\mathsf{A}$ smoothly.
  By the continuity of linear algebra, we can find fully supported 1-step Markov measures on $X$ such that the respected $\lambda^*$ are sufficiently close to $m$ and 1.
  Then given $m<\lambda<1$ the fully supporeted $\mu$ with $\lambda^*=\lambda$ is found by the intermediate value theorem.
  Finally, $(\mathsf{w}_a)_{a\in\mathcal{A}^*_0}$ is a linear combination of eigenvectors of $(\mathsf{A}^*)^p$,
  and $\mu[C^*|_{[0,pn-1]}]_0$ grows approximately in the ratio of $\lambda^n$.
\end{proof}

\begin{remark}\label{rmk::complexity_of_periodic_class_realized}
  All the other right classes on the orbit of $C$ have the same growth rate of complexity as $C$.
  If $C'$ is another right class over $y$ not on the orbit of $C$, that is, $\sigma^jC'\ne C$ for all $j\ge0$,
  then we have $p$ in $\mathbb{N}$ and $0<\Lambda<1$ such that given $\Lambda<\lambda,\lambda'<1$ there are a fully supported Markov measure $\mu$ on $X$ and $K,K'>0$ with
  \[ \lim_n\mu[C^*|_{[0,pn-1]}]_0/\lambda^n=K\text{ and }\lim_n\mu[C'^*|_{[0,pn-1]}]_0/\lambda'^n=K'. \]
 The proof is similar to Lemma \ref{lmm::complexity_of_periodic_class_realized}.
\end{remark}
      
\begin{proposition}\label{prop::transitional_periodic_case}
  If there is a periodic point in $Y$ which admits a transition between distinct rights classes,
  then $X$ admits a fully supported Markov measure which is not sent to a Gibbs measure by $\pi$.
\end{proposition}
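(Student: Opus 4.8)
The plan is to reproduce, at the generality of the hypothesis, the mechanism of Example \ref{eg::transitional_cases}: I would exhibit a periodic point $y$ in $Y$ and a fully supported Markov measure $\mu$ on $X$ such that, for a suitable period $p$ of $y$ and some $0<\lambda<1$,
\[
  c_1 n \lambda^n \le \nu[y|_{[0,pn-1]}]_0 \le c_2 n^a \lambda^n
\]
for all $n$, where $\nu=\pi\mu$ and $c_1,c_2,a>0$ are constants. Since $y$ is periodic, for any normalized potential $f$ one has $\exp S_0^{pn-1}f(y)=c^n$ with $c=\exp\sum_{j=0}^{p-1}f(\sigma^jy)>0$. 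If $\nu$ were a Gibbs measure for $f$, the upper bound together with the left-hand side of (\ref{eqn::gibbs_inequality}) would force $c\le\lambda$, the lower bound together with the right-hand side would force $c\ge\lambda$, hence $c=\lambda$; but then $c_1 n\lambda^n\le K_2\lambda^n$ gives $c_1 n\le K_2$, impossible for large $n$. So $\nu$ is not Gibbs.

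First I would reduce to the case of a \emph{nonstop} transition (Definition \ref{defn::nonstop_transition}) $C\to^rC'$ over a periodic point, which I keep calling $y$. The right classes over a periodic point form a finite poset under $\to^r$ (if $C\to^rC'$ and $C'\to^rC$ then $C=C'$), and from a transition between two distinct classes one extracts a nonstop one over a periodic point; this is automatic when some periodic point carries exactly two right classes joined by a transition. I would also note that $C'$ is not on the orbit of $C$: if $\sigma^jC=C'$, then by shift-equivariance and transitivity of $\to^r$ one gets $C'\to^rC$, hence $C=C'$. Then, by Lemma \ref{lmm::bridge_length}, the nonstop transition $C\to^rC'$ has finite length $L$, i.e. there is a uniform bound on the lengths of bridges from $C$ to $C'$; concretely every sufficiently long marked block of $C$ can be continued, within a bounded number of coordinates, into a marked block of $C'$. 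Using Remark \ref{rmk::complexity_of_periodic_class_realized} together with the continuity / intermediate-value argument from the proof of Lemma \ref{lmm::complexity_of_periodic_class_realized}, I would fix a common period $p$ of $y,C,C'$ and a fully supported Markov measure $\mu$ on $X$ with $\mu[C^*|_{[0,pn-1]}]_0\sim K\lambda^n$ and $\mu[C'^*|_{[0,pn-1]}]_0\sim K'\lambda^n$ for the \emph{same} $\lambda\in(\Lambda,1)$, chosen close enough to $1$ that the realized complexity rate of \emph{every} right class over $y$ is at most $\lambda$; here $C^*,C'^*$ are the marked sublanguages of Lemma \ref{lmm::complexity_of_periodic_class}.

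For the lower bound I would, for each $m$ in a suitable arithmetic progression with $1\le m\le n-O(1)$, concatenate a marked block of $C^*$ of length about $pm$, a bridge of bounded length, and a marked block of $C'^*$ of length about $p(n-m)$; aligning periods, all pieces project to $y$, so (as $X$ is $1$-step) this produces words in $\pi^{-1}(y|_{[0,pn-1]})$, and the families for distinct $m$ are pairwise disjoint. Summing $\mu$ over the words attached to a fixed $m$ contributes at least $\delta\,\mu[C^*|_{[0,pm]}]_0\,\mu[C'^*|_{[0,p(n-m)]}]_0\asymp\delta\lambda^m\lambda^{n-m}=\delta\lambda^n$, where $\delta>0$ bounds below the $\mu$-measure of the bridge pieces used; with $\asymp n$ values of $m$ this yields $\nu[y|_{[0,pn-1]}]_0\ge c_1 n\lambda^n$. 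For the upper bound I would write $\pi^{-1}(y)|_{[0,pn-1]}=\bigcup_D D|_{[0,pn-1]}$ over the finitely many right classes $D$ over $y$, so that $\nu[y|_{[0,pn-1]}]_0\le\sum_D\mu[D|_{[0,pn-1]}]_0$; bounding the transient blocks of each $D$ by Lemma \ref{lmm::marked_in_point} and Remark \ref{rmk::depths_and_degree_and_marked_blokcs} (which for periodic $y$ bound uniformly how far a block must be extended to become marked) and invoking Lemma \ref{lmm::complexity_of_periodic_class}, one obtains $\mu[D|_{[0,pn-1]}]_0\le c_D\,n^{a_D}\,\lambda_D^n\le c_D\,n^{a_D}\,\lambda^n$ by the dominance arranged above. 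Summing over $D$ gives the required upper bound, and the first paragraph concludes.

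The part I expect to be the main obstacle is the measure construction: producing a single fully supported Markov measure under which the two transition classes share a complexity rate $\lambda$ that simultaneously dominates the rates of all other right classes over $y$. Remark \ref{rmk::complexity_of_periodic_class_realized} as stated only controls $C^*$ and $C'^*$, so this step needs the simultaneous tuning of several Perron eigenvalues by a finite-dimensional continuity argument. A secondary delicate point is the reduction to a nonstop transition, since a nonstop transition need not exist over an arbitrary periodic point that merely exhibits a transition; one has to verify that passing to a suitable periodic point recovers one.
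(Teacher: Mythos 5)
Your proposal is correct and follows essentially the same route as the paper's proof: reduce to a nonstop transition $C\to^rC'$ over a periodic point with $C'$ off the orbit of $C$, invoke Lemma \ref{lmm::bridge_length} to bound bridge lengths, use Remark \ref{rmk::complexity_of_periodic_class_realized} to tune a fully supported Markov measure so that $C^*$ and $C'^*$ share a dominant rate $\lambda$, and then count the $\asymp n$ admissible bridge positions to force $\nu[y|_{[0,pn-1]}]_0\asymp n\lambda^n$, which is incompatible with the geometric behaviour $\exp S_0^{pn-1}f(y)=\tau^n$ required by (\ref{eqn::gibbs_inequality}). The two delicate points you flag (extracting a nonstop transition and the simultaneous tuning of the Perron eigenvalues of all classes over $y$) are exactly the steps the paper also treats briefly, so your account matches the paper's argument in both substance and level of detail.
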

\begin{proof}
  Let $|u|=p$ and $y=u^\infty$ denote a periodic point in $Y$ with $y|_{[pn,p(n+1)-1]}=u$ for all $n$ in $\mathbb{Z}$.
  Let $C\to^rC'$ be nonstop transition over $y$.
  Since $y$ is periodic, $C$ and $C'$ are not on the same orbit.
     
  Replacing $u$ with a power of $u$ if needed, we may assume $p>3N$ where
  \[ N:=\max\{\ell(\vec{x})\mid\vec{x}\text{ is a bridge from }C\text{ to }C'\}<\infty. \]
  Let $\vec{C}$ be the set of all bridges $\vec{x}$ from $C$ to $C'$ with $\vec{x}|_I$ transient through $I$ for some $I\subset[1,p]$.
  It is finite.
  We also may assume $\sigma^pD=D$ for any right class $D$ over $y$.
  
  Since there is a point of more than one class $X$ is nontrivial.
  Assume large enough $p$ and apply Remark \ref{rmk::complexity_of_periodic_class_realized} to find a fully supported Markov measure $\mu$ on $X$ and $K,K'>0,0<\varepsilon<\lambda<1$ with
  \[ \lim_n\frac{\mu[C^*|_{[0,pn-1]}]_0}{\lambda^n}=K,\lim_n\frac{\mu[C'^*|_{[0,pn-1]}]_0}{\lambda^n}=K'	 \]
  and
  \[ \lim_n\frac{\mu[D^*|_{[0,pn-1]}]_0}{\varepsilon^n}=O(1)	 \]
  for all the other right classes $D$ over $y$ not on the orbits of $C$ and $C'$.
  
  Every bridge from $C|_{[0,pn-1]}$ to $C'|_{[0,pn-1]}$ has their transient blocks contained in $[Nj,Nj+p-1]$ for some $j$ in $\mathbb{Z}^+$.
  Consider the cylinder sets determined by by all those bridges from $C|_{[0,pn-1]}$ to $C'|_{[0,pn-1]}$.
  The measure of their union is bounded above and below by $\lambda^{n-1}$ multiplied with the number of bridges from $C|_{[0,p-1]}$ to $C'|_{[0,p-1]}$ and with the number of intervals of length $p$ in $[0,pn-1]$, up to constants.
  That is,
  \[ C_1\cdot n\lambda^n<\mu\bigcup_{0\le j\le p(n-1)/N}\mu[\vec{C}|_{[Nj,Nj+p-1]}]_{Nj}<C_2\cdot n\lambda^n \]
  for some $C_1,C_2>0$ and all $n\ge1$.
    
  For $\nu$ to be a Gibbs measure, there must be a continuous function $f:X\to\mathbb{R}$ with
  \begin{equation}\label{eqn::gibbs}
    \frac{\nu[u^n]_1}{\exp S_0^{pn-1}f(y)}=\frac{\nu[u^n]_1}{\tau^n},
  \end{equation}
  where $\tau=\exp S_pf(y)$ is bounded above and below at the same time by positive real numbers.
  However, resetting the values of $C_1,C_2>0$ if needed, we have for all $n\ge1$ 
  \[
    \frac{C_1(2+n)\lambda^n}{\tau^n}<\frac{\nu[u^n]_1}{\tau^n}<p\frac{C_2(2+n)\lambda^n}{\tau^n}.
  \]
  So (\ref{eqn::gibbs}) goes to 0 as $n$ increases if $\tau>\lambda$ and to $\infty$ otherwise because of the summand $n\lambda^n$.
  Therefore $\nu$ fails to be a Gibbs measure.
\end{proof}

Proposition \ref{prop::transitional_periodic_case} is only about transitions between classes over periodic points.
Here we show that bi-continuing factor maps admits such a transition if it does a transition between classes over any points.

\begin{proposition}\label{prop::continuing_transitioncal_case}
  Let $\pi$ be bi-continuing.
  If there is a point in $Y$ which admits a transition between distinct rights classes,
  then $X$ admits a fully supported Markov measure which is not sent to a Gibbs measure by $\pi$.
\end{proposition}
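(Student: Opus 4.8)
The plan is to deduce the statement from Proposition~\ref{prop::transitional_periodic_case} by producing a \emph{periodic} point of $Y$ that itself admits a transition between two distinct right classes. I would begin with two reductions. Since the Gibbs property and the transition-class machinery are all conjugacy invariants and continuing factor maps are eresolving up to topological conjugacy, I may assume $\pi$ is bi-eresolving, so that any preimage of a left-infinite (resp.\ right-infinite) ray of a point of $Y$ extends to a genuine preimage of that point. I would also record a structural consequence of the hypothesis: $\pi$ cannot be right class-closing. Indeed, if $x\to^r x'$ realises a transition over $y$ with $[x]^r\ne[x']^r$ and $\vec x$ is a $0$-bridge from $x$ to $x'$, then $\vec x$ is left asymptotic to $x$, while $\vec x$ and $x'$ agree on a right tail, so $\vec x$ is an $m$-bridge from $\vec x$ to $x'$ and $x'$ is an $m$-bridge from $x'$ to $\vec x$ for every $m$; hence $\vec x\sim^r x'$, and were $\pi$ right class-closing we would get $\vec x\sim^r x$ and so $x\sim^r x'$, a contradiction. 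By Theorem~\ref{thm::constant-class-to-one_and_class-closing_with_continuing}, $d_\pi^r(\cdot)$ is therefore not constant on $Y$.

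Now start from $y$, $x\to^r x'$ with $[x]^r\ne[x']^r$. Using Lemma~\ref{lmm::marked_in_point} I would shift coordinates so that the marking point $\mathsf{mk}(\xi)$ of a chosen representative $\xi$ of each right class over $y$ is negative (with $x'$ chosen as the representative of $[x']^r$); then by Remark~\ref{rmk::marked_blocks} the block $x'|_{[0,N]}$ is marked by $[x']^r$ for every $N\ge0$, and, since $[x']^r\not\to^r[x]^r$, it never occurs at positions $[0,N]$ in a point of $[x]^r$. Fix a $0$-bridge $\vec x$ with $\vec x|_{(-\infty,0]}=x|_{(-\infty,0]}$ and $\vec x|_{[k,\infty)}=x'|_{[k,\infty)}$, $k>0$, and fix a window $W=[M,N]$ of $y$ with $M$ to the left of all the marking points and $N$ so large that, by Theorem~\ref{thm::depths_and_degree} and Remark~\ref{rmk::depths_and_degree_and_marked_blokcs}, $\pi^{-1}(y)|_W$ already splits into the $d_\pi^r(y)$ tangled families recording the right classes over $y$; in particular $[0,k]\subseteq W$. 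Using that $X$ (hence $Y$) is mixing, close $y|_W$ into a loop: pick a long connecting word $v$ so that $z:=(y|_W v)^\infty\in Y$, and pick preimages of $v$ so that $y|_W v$ lifts to a periodic preimage $p$ of $z$ equal to $x|_W$ on every copy of $y|_W$, and, likewise, a periodic preimage $p'$ equal to $x'|_W$ on every copy. Splicing $\vec x|_W=(x|_{[M,0]})(\vec x|_{[0,k]})(x'|_{[k,N]})$ into one period of $p$ and continuing as $p'$ yields a single bridge from $p$ to $p'$ over $z$; translating it through the period of $z$ produces bridges at arbitrarily high levels, hence (an $m$-bridge is also an $m''$-bridge for every $m''\le m$) bridges at every level, so $p\to^r p'$.

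It remains to show $p'\not\to^r p$, i.e.\ that the obstruction $[x']^r\not\to^r[x]^r$ over $y$ is not destroyed by the looping; this is the principal difficulty. The intended mechanism is that for $N$ large, Theorem~\ref{thm::depths_and_degree} applied to the periodic (hence recurrent) point $z$ reads the entire right-class structure of $z$ --- in particular which blocks are marked by which classes --- off the finite window $y|_W$, on which that structure already coincides with the one inside $y$; granting this, $x'|_{[0,N]}$ still marks $[p']^r$ and still avoids $[p]^r$ in $z$, so a bridge realising $p'\to^r p$ would change, going to the right, from $y|_W$-windows lying in $[p']^r$ to $y|_W$-windows lying in $[p]^r$, and feeding a segment of it through the eresolving maps would yield a preimage of $y$ agreeing with $x'$ on $(-\infty,0]$ and with $x$ on a right tail, contradicting $[x']^r\not\to^r[x]^r$. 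The same mechanism must also rule out the looping merging $[p]^r$ and $[p']^r$ through a spurious two-way bridge. Making ``feeding a segment through the eresolving maps'' rigorous --- bounding how far a bridge over $z$ can wander and controlling its projection onto data over $y$, using $N$ large and Theorem~\ref{thm::depths_and_degree} to localize everything to $y|_W$ --- is where I expect the real work to lie, since a priori $z$ admits preimages and bridges that $y$ does not. Once such a periodic $z$ is obtained, Proposition~\ref{prop::transitional_periodic_case} applied to it produces a fully supported Markov measure on $X$ whose $\pi$-image is not Gibbs.
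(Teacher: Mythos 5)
Your set-up matches the paper's in outline: reduce to the periodic case of Proposition~\ref{prop::transitional_periodic_case} by passing to a bi-eresolving presentation, periodizing a long window of $y$ that contains a bridge from $x$ to $x'$, and splicing that bridge into the periodized preimages to get $p\to^r p'$. (One technical difference: the paper repeats an exact subword $y|_{[r_1,r_2)}$ of $y$ itself, after refining the choice of bridge positions so that the fibre data $(x_1|_{r_n},\dots,x_d|_{r_n})$ is the same at every $r_n$ --- this makes the repeated preimage blocks genuine cycles in the $1$-step SFT and keeps the periodic point's fibre tied to the fibre of $y$; your construction closes the loop with a mixing connector $v$, which makes $z$ and its fibre much harder to compare with $y$.)

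The genuine gap is exactly where you locate it: you never establish $p'\not\to^r p$, and the mechanism you sketch --- that for $N$ large the entire right-class structure of the periodic point can be read off the window and coincides with the structure inside $y$ --- is not proved and is not how the difficulty is actually resolved. The paper does \emph{not} show that the first candidate pair works. Instead it supposes the periodization fails, i.e.\ $x_2'\to^r x_1'$, takes an $r_1$-bridge $\cev x$ realizing this reverse transition, and uses bi-eresolving to extend the finite window $\cev x|_{[r_1,r_2]}$ to a genuine preimage $\bar x$ of the \emph{original} point $y$. Because $r_1\ge\mathsf{mk}(x_2)$, the block $\bar x|_{[r_1,r_2]}$ is a bridge from $x_2|_{[r_1,r_2]}$ into $C_k|_{[r_1,r_2]}$ for some class $C_k$ over $y$ with $k\ne 2$, hence $D\to^r C_k$ over $y$. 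If $k=1$ this forces $C\sim^r D$, a contradiction; if $k\ge 3$ one restarts the whole construction with the pair $(x_2,x_k)$, and since there are only finitely many classes over $y$ this descent must terminate with a periodic pair admitting a one-way transition. So the resolution is not a direct preservation argument for the class structure under periodization but a finite descent over the classes of $y$, with bi-eresolving used precisely to transport any spurious reverse bridge over the periodic point back into a transition over $y$. Without this (or an actual proof of your ``intended mechanism''), the proposal does not close.
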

\begin{proof}
  We show that if $\pi$ is bi-continuing and there is a point $y$ in $Y$ with two distinct right classes $C\to^rD$,
  then there is a periodic point $y'$ in $Y$ and two right classes $C'\ne D'$ with $C'\to^rD'$.
  Then Proposition \ref{prop::transitional_periodic_case} completes the proof.
  
  Up to conjugacy, assume that $\pi$ is bi-eresolving.
  Take a point $y$ with $d=d_\pi^r(y)$ distinct right classes $C=C_1\to^rD=C_2,\cdots,C_d$ where $C\to^rD$ is nonstop.
  Choose $x_i$ from each $C_i,1\le i\le d$.
  Select a sequence $\{\vec{x}_n\}_{n\ge1}$ of bridges from $x_1$ to $x_2$ with $r_n=r_{x_1}(\vec{x}_n)<l_n=\ell(\vec{x}_n)<r_{n+1}$.
  By shifting the points, we may assume $r_1=0\ge M=\max_i\mathsf{mk}(x_i)$.
  Further we require that for each $n\ge1$ $\pi^{-1}(y)|_{[M,r_{n+1}]}$ is partitioned into $d$ tangled subsets:
  it is possible by Remark \ref{rmk::depths_and_degree_and_marked_blokcs}.
  Finally, refine the selection of $\vec{x}_n$ so that  $d$-tuples $(x_1|_{r_n},\cdots,x_d|_{r_n}),n\ge1$, are all the same independently of $n$.

  Repeat $x_1|_{[r_1,r_2)},\cdots,x_d|_{[r_1,r_2)}$ and $y|_{[r_1,r_2)}$ to get $x_1',\cdots,x_d'$ and $y'$ which are periodic points given by the following:
  \[ x_i'|_j=x|_r\text{ and }y'|_j=y|_r\text{ if }j\mod r_2\equiv r \]
  for $1\le i\le d,j\in\mathbb{Z}$.
  As we have a bridge from $x'_1|_{[r_1,r_2]}=x_1|_{[r_1,r_2]}$ to $x'_2|_{[r_1,r_2]}=x_2|_{[r_1,r_2]}$,
  there is a transition $x'_1\to^rx'_2$.
  If $x'_2\not\to^rx'_1$, then $y'$ is a desired periodic point with $C'=[x'_1]^r,D'=[x'_2]^r$ and we are done.
  
  Now, suppose to the contrary, that is, $x'_2\to^rx'_1$.
  Find an $r_1$-bridge $\cev{x}$ from $x'_2$ to $x'_1$.
  As $\pi$ is bi-eresolving, we can find a preimage $\bar{x}$ of $y$ with $\bar{x}|_{[r_1,r_2]}=\cev{x}|_{[r_1,r_2]}$.  
  Let $\bar{x}|_{r_2}$ be in $C_k|_{r_2}$ for some $1\le k\le d$.
  If $k=2$ and $\bar{x}|_{r_2}$ stays in $D|_{r_2}$,
  then we reset $\bar{x}$ to be another preimage of $y$ with $\bar{x}|_{[r_1,r_2]}=\cev{x}|_{[lr_2,(l+1)r_2]}$ where $l$ is the smallest natural number with $k\ne2$.
  Such $l$ must exist since $\cev{x}$ is a bridge to $x'_1$.
  
  Note that $\bar{x}|_{[r_1,r_2]}$ is indeed a bridge from $x_2|_{[r_1,r_2]}$ into $C_k|_{[r_1,r_2]}$,
  so $D\to^rC_k$ since $r_1\ge\mathsf{mk}(x_2)$.  
  If $k=1$, then $x_1\sim^rx_2$ and $C=D$, which is a contradiction.
  If $k\ge3$, then $x_2\to^rx_k$, and
  we apply the same argument as above to $x_2$ and $x_k$.
  That is, construct peridoic points $x_2'$ and $x_k'$ repeating $x_2|_{[r_1,r_2]}$ and $x_k|_{[r_1,r_2]}$,
  and see whether $x_k'\to^rx_2'$ or not.
  If not, we found the desired periodic points.
  If so, by the similar argument as in the case of $x_1'$ and $x_2'$
  we get another contradiction $x_2\sim^rx_k$ or another transition from $x_k$ into some transition class over $y$ other than $C,D,C_k$.
  As there are finitely many classes over $y$,
  we may not continue this process forever and in the end will find two non-equivalent periodic points with a transition between them.
\end{proof}

\section{When factor maps reduce periods}
  
In the present section, we show that factor maps does not preserve Gibbsian property if it reduces the periods of right classes.
Throughout the section $\pi$ is always a 1-block factor map from a two-sided 1-step mixing shift of finite type $X$ over $\mathcal{A}$ onto a sofic shift $Y$ over $\mathcal{A}$.

\begin{lemma}\label{lmm::local_retract}
  Let $y$ be a periodic point in $Y$.
  Then we have conjugacies $\phi:X\to\bar{X}$ and $\psi:Y\to\bar{Y}$
  such that $\bar{\pi}:=\psi\circ\pi\circ\phi^{-1}:\bar{X}\to\bar{Y}$ is a 1-block conjugacy with
  \[ \bar\pi^{-1}(\bar{y})|_{[m,n]}=\bar\pi^{-1}(\bar{y}|_{[m,n]}) \]
  for all $\bar{y}$ in $\bar{Y}$ and $m\le n$ in $\mathbb{Z}$.
\end{lemma}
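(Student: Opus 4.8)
The plan is to realise $\bar\pi$ as a higher block recoding of $X$ and $Y$ from which the exact values of a bounded ``collar'' of coordinates have been erased in favour of their $\pi$-images; periodicity of $y$ is precisely what makes a collar of bounded width suffice, and the name \emph{local retract} reflects that the conclusion is really about the fibre over (the image of) the single periodic point $y$ and its orbit.

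First I would set up the fibre graph over $y$. Write $y=u^\infty$ with $|u|=q$ and let $H$ be the finite graph with vertices $(i,a)$, $i\in\mathbb Z/q$, $\pi(a)=u_i$, and an edge $(i,a)\to(i{+}1,b)$ exactly when $ab\in\mathcal B_2(X)$; then $\pi^{-1}(y)$ is the set of bi-infinite paths in $H$ and $\pi^{-1}(y|_{[m,m+\ell]})$ is the set of length-$\ell$ paths starting in column $m\bmod q$. Put $K=|V(H)|$. A path of length $K$ in $H$ has $K{+}1$ vertices, hence repeats a vertex and contains a cycle, so a vertex admitting a path of length $K$ in one direction admits an infinite path in that direction. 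Consequently the displayed identity, for a point whose fibre graph is $H$, says exactly that after recoding no vertex of that fibre graph is ``transient'', i.e.\ every such vertex lies on a bi-infinite path of the fibre graph; it is this that I would prove for $\bar y=\psi(y)$, and then along the orbit of $\psi(y)$ by applying $\sigma$.

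Next I would define the recoding. Let $\psi\colon Y\to\bar Y$ be the radius-$K$ higher block conjugacy, and let $\phi\colon X\to\bar X$ be
\[
  \phi(x)_p=\bigl(\pi(x|_{[p-K,\,p-1]}),\ x_p,\ \pi(x|_{[p+1,\,p+K]})\bigr).
\]
Since $\phi(x)_p$ records $x_p$ exactly, $\phi$ is injective and hence a conjugacy onto $\bar X:=\phi(X)$; the erased collar values $x|_{[p-K,p-1]},x|_{[p+1,p+K]}$ reappear as the central coordinates of $\phi(x)_{p-K},\dots,\phi(x)_{p+K}$, so nothing is in fact lost. One checks directly that $\psi(\pi(x))_p=\pi(x)|_{[p-K,p+K]}$ is determined by $\phi(x)_p$, so $\bar\pi:=\psi\circ\pi\circ\phi^{-1}$ is a well-defined $1$-block code $\bar X\to\bar Y$. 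Verifying that $\phi$ is genuinely a conjugacy (not merely a factor map) and that $\bar\pi$ is well defined on the merged alphabet is the slightly delicate point, and it is exactly here that the collar carrying only $\pi$-information, rather than full information, is used — full information would just give an ordinary higher block code, for which a transient symbol above $\bar y$ can persist no matter how large the window, because a transient endpoint fails only one step beyond the window.

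Finally I would read off the retract identity. Let $s$ be a symbol of $\bar X$ with $\bar\pi(s)=\bar y_0=y|_{[-K,K]}$, written $s=\phi(x)_0$. Then $\pi(x_i)=y_i$ for $-K\le i\le K$, so $x|_{[-K,K]}$ is a path in $H$, and in particular $x_0$ admits $H$-paths of length $K$ both to the left and to the right; by the choice of $K$, $x_0$ therefore lies on a bi-infinite $H$-path $\hat x\in\pi^{-1}(y)$ with $\hat x_0=x_0$. Then $\phi(\hat x)\in\bar\pi^{-1}(\bar y)$ and, since $\pi(\hat x_i)=y_i$ for all $i$, $\phi(\hat x)_0=(y|_{[-K,-1]},\hat x_0,y|_{[1,K]})=(y|_{[-K,-1]},x_0,y|_{[1,K]})=s$, so $s$ occurs as a coordinate of a point of $\bar\pi^{-1}(\bar y)$. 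Hence the fibre graph over $\bar y$ has no transient vertex, and therefore every finite word mapping to a block of $\bar y$ extends to a preimage of $\bar y$, i.e.\ $\bar\pi^{-1}(\bar y)|_{[m,n]}=\bar\pi^{-1}(\bar y|_{[m,n]})$ for all $m\le n$; the same holds along the orbit of $\psi(y)$. The step I expect to be the real obstacle is the middle one — arranging the collar erasure so that it is reversible on the $X$-side while still killing every transient symbol above $\bar y$ — together with the bookkeeping needed to push the conclusion, if one wants it verbatim for every $\bar y\in\bar Y$, via the fact that a bi-continuing $\pi$ is bi-eresolving up to conjugacy, since the construction above is by its nature local to one orbit.
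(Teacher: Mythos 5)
Your construction of $\phi$ and $\psi$ is, up to cosmetic repackaging, exactly the paper's: the paper sets $\phi(x)|_i=(x|_i,\pi(x)|_{[i-R,i+R]})$ and $\psi(y)|_i=y|_{[i-R,i+R]}$, which carries the same information as your $\bigl(\pi(x|_{[i-K,i-1]}),x_i,\pi(x|_{[i+1,i+K]})\bigr)$ since $\pi$ is $1$-block, and the verification that $\phi$ is injective and $\bar\pi$ is $1$-block is identical. Where you genuinely diverge is in how the uniform radius is obtained: the paper invokes an external result (\cite[Lemma 4.15]{AllQ13}), which gives for each interval $[m,n]$ some $R_{m,n}$ with $\pi^{-1}(y)|_{[m,n]}=\pi^{-1}(y|_{[m-R_{m,n},n+R_{m,n}]})|_{[m,n]}$, and then uses periodicity of $y$ to bound $\{R_{m,n}\}$; you instead prove the bound $K=|V(H)|$ directly by the pigeonhole/cycle argument in the fibre graph over $y$. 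Your version is self-contained and makes the role of periodicity transparent (it is what makes $H$ finite); the paper's is shorter by citation. Two small cautions. First, your closing inference ``no transient vertex over $\bar y$, therefore every finite word mapping to a block of $\bar y$ extends'' is slightly too quick as stated, because $\bar X$ need not be $1$-step, so one cannot simply glue a left tail, the given word, and a right tail at single overlapping symbols; the correct route is the one your setup already supplies: unpack a word $\bar w\in\bar\pi^{-1}(\bar y|_{[m,n]})$ to the $H$-path $x|_{[m-K,n+K]}$, extend that path bi-infinitely in $H$ by the same cycle-pumping, and repack, noting that $\phi(\hat x)|_{[m,n]}$ depends only on $\hat x|_{[m-K,n+K]}$. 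Second, you are right that the argument is local to the orbit of the periodic point, and you flag honestly that the statement's ``for all $\bar y\in\bar Y$'' would need extra bookkeeping; the paper's own proof has the same restriction (its final computation silently reuses the retract identity for a generic $y$ although $R$ was fixed using the periodic one), and only the periodic orbit is ever used downstream, so this is not a defect of your proposal relative to the paper's.
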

\begin{proof}
  By \cite[Lemma 4.15]{AllQ13}, for each $m,n$ we have $R_{m,n}$ in $\mathbb{Z}^+$ with
  \[ \pi^{-1}(y)|_{[m,n]}=\pi^{-1}(y|_{[m-R_{m,n},n+R_{m,n}]})|_{[m,n]}. \]
  As $y$ is periodic, $\{R_{m,n}\}$ is finite.
  Set $R:=\max_{m,n}R_{m,n}$ so that we have for any $m,n$
  \[ \pi^{-1}(y)|_{[m,n]}=\pi^{-1}(y|_{[m-R,n+R]})|_{[m,n]}. \]
 
  Define $\phi$ on $X$ and $\psi$ on $Y$ by
  \[ \phi(x)|_i=(x|_i,\pi(x)|_{[i-R,i+R]})\text{ and }\psi(y)|_i=y|_{[i-R,i+R]}, \]
  respectively, and set $\bar{X}:=\phi(X)$ and $\bar{Y}:=\psi(Y)$.
  They are clearly injective, and hence, conjugacies onto $\bar{X}$ and $\bar{Y}$, respectively.
  Also $\bar\pi:=\psi\circ\pi\circ\phi^{-1}$ is a 1-block map since given any $\bar{x}=\phi(x)$ in $\bar{X}$ we have $\bar\pi(\bar{x})|_i=\psi(\pi(x))|_i=\pi(x)|_{[i-R,i+R]}$,
  which is just the second component of $\bar{x}|_i$.
  
  Finally, consider any $\bar{y}=\psi(y)$ in $\bar{Y}$.
  Then,
  \[\begin{split}
    \bar\pi^{-1}(\bar{y})|_{[m,n]}&=\phi(\pi^{-1}(y))|_{[m,n]}\\
    &=\{(x|_m,y|_{[m-R,m+R]})\cdots(x|_n,y|_{[n-R,n+R]})\mid x\in\pi^{-1}(y)\}.
  \end{split}\]
  On the other hand,
  \[\begin{split}
    \bar\pi^{-1}(\bar{y}|_{[m,n]})=&\bar\pi^{-1}(y|_{[m-R,n+R]})\\
    =&\{(u|_1,y|_{[m-R,m+R]})\cdots(u|_{[n-m+1]},y|_{[n-R,n+R]})\mid\\
    &\,\,\,\,\,\,\,\,\,\,\,\,\,\,\,\,\,\,\,u\in\pi^{-1}(y|_{[m-R,n+R]})|_{[m,n]}=\pi^{-1}(y)|_{[m,n]}\}\\
    =&\{(x|_m,y|_{[m-R,m+R]})\cdots(x|_n,y|_{[n-R,n+R]})\mid x\in\pi^{-1}(y)\}.
  \end{split}\]
  Therefore $\bar\pi^{-1}(\bar{y})|_{[m,n]}=\bar\pi^{-1}(\bar{y}|_{[m,n]})$.
\end{proof}
  
\begin{lemma}\label{lmm::gibbs_potential_for_periodic_points}
  Let $\nu$ be a Gibbs measure on $Y$ for a normalized potential $f:Y\to\mathbb{R}$
  which is also the image of a Markov measure $\mu$ on $X$ under a factor map $\pi$.
  Let $\pi$ admit no transition between distinct classes.
  Then for a periodic point $y$ of $Y$ with period $p$ we have
  \[ \exp S_0^{p-1}f(y)=\lim_n\frac{\nu[y|_{[0,n]}]_0}{\nu[y|_{[p,n]}]_0}. \]
\end{lemma}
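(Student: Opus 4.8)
\emph{Reductions.} The first step is to strip the statement down by conjugacy. Lemma~\ref{lmm::local_retract} furnishes conjugacies $\phi,\psi$ after which the factor map satisfies $\bar\pi^{-1}(\bar y)|_{[m,n]}=\bar\pi^{-1}(\bar y|_{[m,n]})$ for all $m\le n$; since being a Markov measure, being a Gibbs measure for a normalized potential, and both sides of the asserted equality are invariant under conjugacy, I may assume from the outset that $\pi$ and $y$ have this local retract property, that $\mu$ is a $1$-step Markov measure on a $1$-step mixing shift of finite type $X$, and, after passing to the support of $\mu$, that $\mu$ is fully supported, so that its stationary vector is strictly positive. Because $y$ is $p$-periodic one has $S_0^nf(y)-S_0^{n-p}f(y)=S_0^{p-1}f(y)=:\log\rho$ for $n\ge p$, and $\nu[y|_{[p,n]}]_0=\nu[y|_{[0,n-p]}]_0$ by $\sigma$-invariance, so the claim is equivalent to $\nu[y|_{[0,n]}]_0/\nu[y|_{[0,n-p]}]_0\to\rho$.

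\emph{A one-period transfer matrix.} Using the local retract property, $\nu[y|_{[0,n]}]_0=\mu\bigl(\pi^{-1}(y|_{[0,n]})\bigr)=\sum_{w\in\pi^{-1}(y)|_{[0,n]}}\mu[w]_0$, and expanding the $1$-step Markov weights column by column along the orbit of $y$ yields nonnegative matrices $Q^{(i)}$, indexed by $\pi^{-1}(y_i)\times\pi^{-1}(y_{i+1})$ and $p$-periodic in $i$, with $\nu[y|_{[0,kp+r]}]_0=\langle\mathbf p,\,M^k\mathbf v_r\rangle$, where $M:=Q^{(0)}\cdots Q^{(p-1)}\ge0$ is the transfer matrix over one period, $\mathbf p$ is the (strictly positive) stationary vector restricted to $\pi^{-1}(y_0)$, and $\mathbf v_r\ge0$ records the remaining $r$ columns, with $\mathbf v_0=\mathbf 1$. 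The Gibbs inequality applied at $y$ gives $\langle\mathbf p,M^k\mathbf v_r\rangle\asymp\rho^k$ for every residue $r$; taking $r=0$ and using $\mathbf p\gg0$ forces $\sum_{a,b}(M^k)_{ab}\asymp\rho^k$, so $M$ has spectral radius $\rho$, and the absence of any polynomial prefactor means that $\rho$ is a semisimple eigenvalue of $M$.

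\emph{Convergence and the main difficulty.} What remains is to show that $\rho$ is the only eigenvalue of $M$ of modulus $\rho$; granting this, $\rho^{-k}M^k$ converges to a nonnegative matrix $\Pi$, hence $\rho^{-k}\nu[y|_{[0,kp+r]}]_0\to\langle\mathbf p,\Pi\mathbf v_r\rangle$, which is strictly positive by the lower Gibbs bound, and therefore $\nu[y|_{[0,n]}]_0/\nu[y|_{[0,n-p]}]_0\to\rho=\exp S_0^{p-1}f(y)$. Equivalently, one must exclude a maximal (spectral radius $\rho$) communicating class of $M$ of period $q>1$, and this is where both hypotheses are genuinely used. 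Since $y$ is periodic, the marked/transient decomposition of Definition~\ref{defn::marked_and_transient_blocks} together with Lemma~\ref{lmm::marked_in_point} shows that the transient part of $\pi^{-1}(y)|_{[0,n]}$ carries comparatively negligible mass, so $M$ is governed by the marked blocks, that is by the right classes over $y$ and their growth rates as in Lemma~\ref{lmm::complexity_of_periodic_class}; a maximal communicating class of $M$ of period $q>1$ would then force a right class $C$ over $y$ whose period is a proper multiple $qp$ of $p$, and the no-transition hypothesis makes $C,\sigma^pC,\dots,\sigma^{(q-1)p}C$ into $q$ distinct, mutually isolated right classes that contribute a single $q$-periodically oscillating block to $M$. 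I would then invoke the continuity of $f$ to pin the exponential normalizations of the $\nu$-measures of two suitably chosen blocks along the orbit of $y$ — obtained by cutting $y$ and regluing it with a shifted $\sigma^p$-phase — to the \emph{same} value, while the oscillation of that block of $M$ makes their actual $\nu$-measures differ by more than any fixed ratio, contradicting the existence of a single pair of Gibbs constants. Carrying out this last step — converting ``no transition plus Gibbs'' into the exclusion of imprimitivity for $M$ — is the real work; the remaining linear-algebra bookkeeping is routine.
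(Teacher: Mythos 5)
Your reductions and the transfer-matrix setup are sound and essentially reproduce the paper's framework (Lemma~\ref{lmm::local_retract}, then a linear-algebraic expression for $\nu[y|_{[0,n]}]_0$ via the fiber over one period). The problem is that you stop exactly where the lemma's content begins: you say yourself that converting ``no transition plus Gibbs'' into the exclusion of imprimitivity for $M$ ``is the real work'' and only sketch it, so the proposal is not yet a proof. Worse, the sketched route aims at the wrong target. Imprimitivity of the one-period transfer matrix cannot be excluded: a right class $C$ over $y$ may genuinely have period $qp>p$ (this is precisely the situation of Proposition~\ref{prop::period_blowingup_case}, in whose proof this lemma is applied), and then $M$ restricted to the orbit of $C$ is cyclic, so $\rho$ is not the only peripheral eigenvalue. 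Your proposed contradiction also does not materialize: if $\rho^{-k}\nu[y|_{[0,pk-1]}]_0$ oscillates periodically among finitely many strictly positive limits $K_0,\dots,K_{s-1}$, then $\nu[y|_{[0,pk-1]}]_0\asymp\rho^k$ with constants $\min_jK_j$ and $\max_jK_j$; a two-sided Gibbs bound with a single pair of constants is perfectly compatible with such bounded oscillation, so the measures do \emph{not} ``differ by more than any fixed ratio'' and no contradiction with (\ref{eqn::gibbs_inequality}) follows from imprimitivity alone. (If your argument worked, Proposition~\ref{prop::period_blowingup_case} would hold for \emph{every} fully supported Markov measure, whereas the paper must tune the measure delicately to force two specific limits to disagree.)

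What the paper actually does is prove the convergence \emph{despite} the imprimitivity. The no-transition hypothesis gives $\nu[y|_{[0,n-1]}]_0=\sum_C\mu[C^*|_{[0,n-1]}]_0$, a finite sum over right classes, and by Lemma~\ref{lmm::complexity_of_periodic_class} each summand, read along multiples of that class's own period $p(C)$, is asymptotic to $K(C)\lambda(C)^n$ with a \emph{primitive} matrix behind it. This yields convergence of $\nu[y|_{[0,qn+l-1]}]_0/\Lambda^{qn/p}$ for each residue $l$, where $\Lambda$ is the dominant rate and $q$ the period of a dominant class; the Gibbs inequality applied between times $pm+l$ and $pn+l$ with $n-m\to\infty$ along multiples of $q/p$ then forces $\Lambda=\exp S_0^{p-1}f(y)$, and the residue-by-residue limits are glued into convergence of the consecutive ratio. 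To complete your argument you must supply this identification of all peripheral contributions along arithmetic subsequences, rather than attempt to rule the peripheral spectrum out.
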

\begin{proof}
  Let $u_n=y|_{[0,n-1]}$ for each $n\ge1$.
  As $y$ is periodic, by Lemma \ref{lmm::local_retract} we may assume up to some conjugacy
  \[ \nu[u_n]_0=\sum_{w\in\pi^{-1}(u_n)}\mu[w]_0=\sum_{C\in\llbracket y\rrbracket^r}\mu[C|_{[0,n-1]}]_0. \]
  As $\pi$ admits no transition between distinct classes, we further have
  \[ \nu[u_n]_0=\sum_C\mu[C^*|_{[0,n-1]}]_0. \]
  By Lemma \ref{lmm::complexity_of_periodic_class}, for each right class $C$ over $y$ we have some $0<\lambda(C)<1$, $K(C)>0$ and $p(C)$ in $\mathbb{N}$ with $\lim_n\mu[C^*|_{[0,p(C)n-1]}]_0/\lambda(C)^n=K(C)$.
  Let $\Lambda=\max_C\lambda(C)^{p/p(C)}$ and $D$ be a right class over $y$ with $\lambda(D)^{p/p(D)}=\Lambda$.
  Let $q=p(D)$.
  All the $\sigma^pD,\sigma^{2p}D,\cdots,\sigma^{q-p}D$'s are distinct but $\lambda({\sigma^pD})=\cdots=\lambda({\sigma^{q-p}D})=\lambda(D)$.
  Then $\mu[D|_{[0,p'n-1]}]_0/\lambda(D)^n$ converges and so do $\mu[\sigma^{jp}D|_{[0,p'n-1]}]_0/\lambda(D)^n$, $0<j<q/p$, as $n$ increases.
  
  Now, let $\lambda=\exp S_0^{p-1}f(y)$.
  There exists $M>0$, by the Gibbs inequality for $\nu$, such that for all $n$ we have
  \[
    \frac1M\lambda^{n-m}<\frac{\nu[u_{pn+l}]_0}{\nu[u_{pm+l}]_0}<M\lambda^{n-m}
  \]
  for all $m<n$ and $0\le l<q$.
  Also we have
  \[ \lim_n\frac{\nu[u_{qn}]_0}{\Lambda^{qn/p}}=K \]
  for some $K>0$.
  Similarly,
  \[ \lim_n\frac{\nu[u_{qn+l}]_0}{\Lambda^{qn/p}}=K_l \]
  for each $0<l<q$ and some $K_l>0$.  
  
  If $m$ and $n$ increase along multiples of $q/p$, then $\nu[u_{pn}]_0/\nu[u_{pm}]_0\to\Lambda^{n-m}$, so $\Lambda=\lambda$.  
  Similarly, $\nu[u_{p(n+1)+l}]_0/\nu[u_{pn+l}]_0\to\lambda$ when $n$ increases along multiples of $q/p$ for each $0<l<q$.
  Hence $\nu[u_{n+p}]_0/\nu[u_n]_0\to\lambda$, that is,
  \[ \lambda=\lim_n\frac{\nu[y|_{[0,n]}]_0}{\nu[y|_{[p,n]}]_p}. \]
\end{proof}

An example is presented to illustrate typical behaviour in period reducing cases.
  
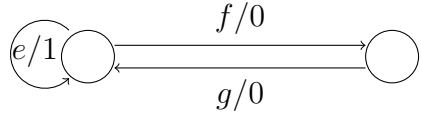
\begin{figure}
  \begin{tikzpicture}[->]
    \node[circle,draw,inner sep=7pt] (I) at (0,2) {};
    \node[circle,draw,inner sep=7pt] (J) at (4,2) {};
      
    \draw (0.35,2.15) to (2,2.15) node[above] {$f/0$} to (3.65,2.15);
    \draw (3.65,1.85) to (2,1.85) node[below] {$g/0$} to (0.35,1.85);
    \draw (-0.25,2.35) arc (45:315:0.45) node[above left] {$e/1$};
  \end{tikzpicture}
  \caption{$0^\infty$ admits a right class of period 2}\label{fig::non-fixed_periodic_case}
\end{figure}

\begin{example}\label{eg::non-fixed_periodic_case}
  In Figure \ref{fig::non-fixed_periodic_case}  $0^\infty$ has two right classes of period 2.
  Define a 1-step fully supported Markov measure $\mu$ on $X$ given by the stochastic matrix
  \[ P=\begin{pmatrix}1-p&p&0\\0&0&1\\1-p&p&0\end{pmatrix}. \]
  Its initial probability vector is given by $((1-p)/(1+p),p/(1+p),p/(1+p))$.
  Let $\nu=\pi\mu$.
  Then
  \[ \frac{\nu[0^{2n+1}]_0}{\nu[0^{2n}]_0}=\frac{p^n\cdot p/(1+p)+p^n\cdot p/(1+p)}{p^{n-1}\cdot p/(1+p)+p^n\cdot p/(1+p)}=\frac{2p}{1+p} \]
  and
  \[ \frac{\nu[0^{2n+2}]_0}{\nu[0^{2n+1}]_0}=\frac{p^n\cdot p/(1+p)+p^{n+1}\cdot p/(1+p)}{p^n\cdot p/(1+p)+p^n\cdot p/(1+p)}=\frac{1+p}2. \]
  By Lemma \ref{lmm::gibbs_potential_for_periodic_points}, for $\nu$ to be a Gibbs measure for some potential $f$,
  $\nu[0^{n+1}]_0/\nu[0^n]_1$ needs to converge.
  If so, then $4p=(1+p)^2$ and $p=1$.
  However, it implies that $\mu$ is not fully supported.
  That is, all the fully supported 1-step Markov measures on $X$ lose their Gibbs property when transformed under $\pi$.
\end{example}

\begin{proposition}\label{prop::period_blowingup_case}
  If there is a periodic point in $Y$ the period of which is strictly smaller than the period of some right class $C$ over it,
  then $X$ admits a fully supported Markov measure which is not sent to a Gibbs measure by $\pi$.
\end{proposition}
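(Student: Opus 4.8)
The plan is to imitate the proof of Proposition~\ref{prop::transitional_periodic_case} in a setting where no transition is available, so that the obstruction to the Gibbs property is not the spurious polynomial factor $n\lambda^{n}$ occurring there but a genuine \emph{oscillation} of $\nu[y|_{[0,m-1]}]_{0}$, of the kind already exhibited in Example~\ref{eg::non-fixed_periodic_case}.

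First I would dispose of the easy case: if \emph{some} periodic point of $Y$ admits a transition between distinct right classes, Proposition~\ref{prop::transitional_periodic_case} already gives the conclusion, so I may assume no periodic point of $Y$ --- in particular not the given $y$ --- admits such a transition. Let $p$ be the least period of $y$ and $q$ the least period of $C$, with $q>p$. Since $\sigma^{q}C=C\subseteq\pi^{-1}(y)$ forces $\sigma^{q}y=y$, one has $p\mid q$; write $q=kp$ with $k\ge 2$, and note that $C=C_{0},\,C_{1}=\sigma^{p}C_{0},\dots,C_{k-1}=\sigma^{(k-1)p}C_{0}$ are $k$ distinct right classes over $y$, cyclically permuted by $\sigma^{p}$. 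Passing to a conjugate via Lemma~\ref{lmm::local_retract}, I may assume $\pi^{-1}(y)|_{[m,n]}=\pi^{-1}(y|_{[m,n]})$ for all $m\le n$, which is the setting of Lemma~\ref{lmm::gibbs_potential_for_periodic_points}; then, for every fully supported Markov $\mu$ on $X$ and all large $m$, exactly as in the proof of that lemma, $b_{m}:=(\pi\mu)[y|_{[0,m-1]}]_{0}=\sum_{D\in\llbracket y\rrbracket^{r}}\mu[D^{*}|_{[0,m-1]}]_{0}$. Next, by Remark~\ref{rmk::complexity_of_periodic_class_realized}, applied to $C_{0}$ against every right class over $y$ off the orbit of $C_{0}$ exactly as in Proposition~\ref{prop::transitional_periodic_case}, I would choose this $\mu$ so that the orbit classes $C_{0},\dots,C_{k-1}$ carry the common, strictly dominant complexity growth rate $\lambda$ over the period $q$, every other right class over $y$ being strictly subdominant. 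From the Perron--Frobenius analysis behind Lemma~\ref{lmm::complexity_of_periodic_class}, plus one partial-period transfer for the residue, this yields $\mu[C_{j}^{*}|_{[0,qn+r-1]}]_{0}\sim K_{j,r}\lambda^{n}$ with $K_{j,r}>0$ for $0\le r<q$, hence $b_{qn+r}\sim L_{r}\lambda^{n}$ with $L_{r}:=\sum_{j}K_{j,r}>0$.

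Now suppose $\nu:=\pi\mu$ were a Gibbs measure for a normalized potential $f$. By Lemma~\ref{lmm::gibbs_potential_for_periodic_points} --- in fact by its proof, which already isolates this consequence --- together with $\sigma$-invariance and the periodicity of $y$, the ratio $b_{m}/b_{m-p}$ converges to $\rho:=\exp S_{0}^{p-1}f(y)>0$. Evaluating this limit along $m\equiv 0,p,\dots,(k-1)p\pmod q$, the case $m\equiv 0$ being one where $m-p$ lies in the preceding period-block and so contributes an extra factor $\lambda$, forces $L_{p}/L_{0}=L_{2p}/L_{p}=\cdots=L_{(k-1)p}/L_{(k-2)p}=\lambda L_{0}/L_{(k-1)p}=\rho$; multiplying these gives $\rho^{k}=\lambda$, and, running over all residues, $r\mapsto L_{r}\lambda^{-r/q}$ would have to be periodic in $r$ with period $p$. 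In other words, Gibbsianness would force the normalized complexity profile of $C$ to be $p$-periodic, even though the least period of $C$ is $q=kp$.

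The remaining --- and essential --- step is to arrange the $\mu$ above so that $r\mapsto L_{r}\lambda^{-r/q}$ is \emph{not} $p$-periodic, i.e.\ $L_{s+p}/L_{s}\ne\lambda^{1/k}$ for some $s$; then $b_{m}/b_{m-p}$ oscillates, $\nu$ is not Gibbs, and the proposition follows. I would attack this by expanding $\mu[C_{j}^{*}|_{[0,m-1]}]_{0}$ through the $\mu$-weighted one-step transfer matrices $\hat T_{0},\dots,\hat T_{q-1}$ of $C_{0}^{*}$, so that $\hat A^{*}=\hat T_{0}\cdots\hat T_{q-1}$ is primitive with Perron eigenvalue $\lambda$: one gets $b_{m}\sim\Lambda^{m}\sum_{j}\alpha_{j}\beta_{j}\bigl((m-1)\bmod q\bigr)$ with $\Lambda=\lambda^{1/q}$, where $\beta_{j}(s)$ is a partial product $\hat T_{jp}\cdots\hat T_{jp+s-1}$ contracted against the phase Perron eigenvectors, and $\alpha_{j}$ additionally involves the stationary vector of the base Markov matrix. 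Using the intertwinings $w^{(jp)}\hat T_{jp}\cdots\hat T_{(j+1)p-1}=\theta_{j}w^{((j+1)p)}$ --- for which $\prod_{j}\theta_{j}=\lambda$ holds \emph{automatically}, being nothing but $w^{(0)}\hat A^{*}=\lambda w^{(0)}$ --- the desired $p$-periodicity reduces, whenever the vectors $(\beta_{j}(s))_{j}$ span (the generic case), to the scalar system $\alpha_{j}=\Lambda^{-p}\theta_{j-1}\alpha_{j-1}$. The product of these equalities is the automatic identity, so they amount to only $k-1$ genuine constraints; the hard part --- the main obstacle of the proof --- is to show these constraints fail for at least one fully supported $\mu$. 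This is the precise analogue of the fact that in Example~\ref{eg::non-fixed_periodic_case} the single such constraint reads ``$\theta=1$'' and hence fails for every fully supported $\mu$; in general I would establish the non-degeneracy by the continuity/perturbation method of Lemma~\ref{lmm::complexity_of_periodic_class_realized}, exploiting that $C^{*}$ is genuinely not $\sigma^{p}$-invariant, its least period being $q$ rather than $p$. Everything else is bookkeeping already carried out in the preceding lemmas.
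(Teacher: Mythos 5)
Your reduction, setup, and derivation of the necessary condition essentially coincide with the paper's: assume via Proposition \ref{prop::transitional_periodic_case} that no transitions occur over periodic points (so $C=C^*$), pass to the conjugate presentation of Lemma \ref{lmm::local_retract}, use Remark \ref{rmk::complexity_of_periodic_class_realized} to make the orbit of $C$ carry the strictly dominant growth rate, and use the convergence of the one-period ratio of cylinder measures along $y$ forced by Lemma \ref{lmm::gibbs_potential_for_periodic_points} to conclude that Gibbsianness would make the normalized complexity profile of the orbit of $C$ periodic with the smaller period of $y$. Up to there you are on track. The gap is precisely where you yourself locate ``the hard part'': you never actually produce a fully supported Markov measure violating that periodicity constraint. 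Asserting that the $k-1$ scalar constraints fail ``in the generic case'' and invoking ``the continuity/perturbation method'' is a plan, not a proof --- a priori those constraints could be identities satisfied by every Markov measure (ruling this out is the entire content of the proposition), and a perturbation argument can only break a constraint after one has exhibited a witness, or shown that the two sides depend on the parameters in genuinely different ways.

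The paper supplies exactly this missing witness with a concrete device absent from your proposal: an \emph{escaping symbol}. Since $Y$ is not a single fixed point, there is a phase at which some symbol of $C$ can be followed by a symbol outside the next column of $C$; after shifting, one takes a $1$-step Markov measure that permits escape from $C$ only at phase $0$ (with probability $s$) and nowhere else within one period of $C$ (in the paper's notation $p=qr$ is the period of $C$ and $q$ that of $y$). This yields the explicit step asymptotics $\sum_{j}\mu[\sigma^{jq}C|_{[0,pn-1]}]_0=(1-s)^n(P+Q)$ versus $\sum_{j}\mu[\sigma^{jq}C|_{[0,pn+q-1]}]_0=(1-s)^{n+1}P+(1-s)^nQ$, where $P$ and $Q$ are sums of initial probabilities over the relevant columns. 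The two ratios that Lemma \ref{lmm::gibbs_potential_for_periodic_points} forces to coincide then become explicit rational functions of $P$, $Q$, $s$; their equality forces $P=0$ (impossible for a fully supported measure) when $r\neq 2$, and $(1-s)P^2=Q^2$ when $r=2$, which is destroyed by varying $s$ because $P$ and $Q$ depend linearly on $s$ while the constraint is quadratic. Only at that point does continuity enter, to replace the degenerate measure (some escape probabilities were set to $0$) by a nearby fully supported one. Without this construction, or an equally explicit substitute, your argument is incomplete at its decisive step.
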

\begin{proof}
  By Propositions \ref{prop::transitional_periodic_case} we may assume that no transition is allowed between distinct right classes over periodic points.
  Then, for every right class $C$ over a periodic point of $Y$ we have $C=C^*$.
  Choose a periodic point $y$ in $Y$ with the smallest period $q$ and a right class $C$ of the smallest period $p>q$.
  Say $y|_{[0,p-1]}=w^r$ with $|w|=q$, $r\ge2$ and $p=qr$.
  Apply Lemma \ref{lmm::local_retract} to assume $\pi^{-1}(y)|_{[m,n]}=\pi^{-1}(y|_{[m,n]})$ for all $m,n$ in $\mathbb{Z}$.
  
  We need a little more preparation.
  Taking a higher-block presentation, we may assume that $C|_i$, $0\le i<p$, are all distinct.
  If $Y$ consists of a single fixed point $y$,
  then $X$ has to be its unique transition class which is fixed as well.
  We already excluded this case,
  so $Y$ has more than one point and there is $n$ in $\mathbb{Z}$ such that $C|_n$ has a following symbol which lies outside $C|_{n+1}$ and will be called an {\em escaping} symbol from $C$.
  Shifting $y$ we may assume $n=0$.
  Let $\mu$ a 1-step Markov measure on $X$ which assigns large transition probabilities on escaping symbols from $C$ which follows $C|_0$
  and small transition probabilities on escaping symbols from $C$ which follows $C|_n,$ $0<n<p$:
  Say, the latter probabilities are all 0 and the former ones $0<s<1$.
  That is, once you get in $C$ you stay there until $C|_0=C|_p=\cdots$ is met.
  
  Through Lemma \ref{lmm::complexity_of_periodic_class_realized} and Remark \ref{rmk::complexity_of_periodic_class_realized} we modify $\mu$ a little so that
  \[ \lim_n\mu[C|_{[0,pn-1]}]_0/\lambda^n=K>\lim_n\mu[D|_{[0,pn-1]}]_0/\rho^n\ge0 \]
  for some positive real numbers $\rho<\lambda<1,K$ and any right class $D$ over $y$ not on the orbit of $C$.
  Applying Lemma \ref{lmm::gibbs_potential_for_periodic_points} we see that $\nu[y|_{[0,pn+q-1]}]_0/\nu[y|_{[q,pn+q-1]}]_q$ converges.
  Since $\lambda>\rho>0$,
  \begin{equation}\label{eqn::period_blowingup_ratio_1}
    \lim_n\frac{\nu[y|_{[0,pn+q-1]}]_0}{\nu[y|_{[q,pn+q-1]}]_q}=\lim_n\frac{\nu[y|_{[0,pn+q-1]}]_0}{\nu[y|_{[0,pn-1]}]_0}=\lim_n\frac{\sum_{j=0}^{r-1}\mu[\sigma^{jq}C|_{[0,pn+q-1]}]_0}{\sum_{j=0}^{r-1}\mu[\sigma^{jq}C|_{[0,pn-1]}]_0}.
  \end{equation}
  Similarly,
  \begin{equation}\label{eqn::period_blowingup_ratio_2}
    \lim_n\frac{\nu[y|_{[0,pn+2q-1]}]_0}{\nu[y|_{[q,pn+2q-1]}]_q}=\lim_n\frac{\sum_{j=0}^{r-1}\mu[\sigma^{jq}C|_{[0,pn+2q-1]}]_0}{\sum_{j=0}^{r-1}\mu[\sigma^{jq}C|_{[0,pn+q-1]}]_0}.
  \end{equation}

  Let $P$ and $Q$ be the sums of all initial probabilities of the states in $C|_0$, and of all initial probabilities of the states in $C|_{jq}$, $1\le j\le r-1$, determined up to $\mu$, respectively.
  Then
  \[ \sum_{j=0}^{r-1}\mu[\sigma^{jq}C|_{[0,pn-1]}]_0=(1-s)^n(P+Q). \]
  Also
  \[ \sum_{j=0}^{r-1}\mu[\sigma^{jq}C|_{[0,pn+q-1]}]_0=(1-s)^{n+1}P+(1-s)^nQ \]
  and
  \[
    \sum_{j=0}^{r-1}\mu[\sigma^{jq}C|_{[0,pn+2q-1]}]_0=\begin{cases}
      (1-s)^{n+1}(P+Q)&\text{ if }r=2\\
      (1-s)^{n+1}P+(1-s)^nQ&\text{ otherwise}
    \end{cases}.
  \]
  
  Suppose that $\nu=\pi\mu$ is a Gibbs measure.
  The convergence of $\nu[y|_{[0,m]}]_0/\nu[y|_{[q,m]}]_q$ with respect to $m$ is implied by Lemma \ref{lmm::gibbs_potential_for_periodic_points}.
  So $(\ref{eqn::period_blowingup_ratio_1})=((1-s)P+Q)/(P+Q)$ and
  \[
    (\ref{eqn::period_blowingup_ratio_2})=\begin{cases}
      \frac{(1-s)(P+Q)}{(1-s)P+Q}&\text{ if }r=2\\
      1&\text{ otherwise}
    \end{cases}
  \]
  must coincide.
  If $r\ne2$, then $(\ref{eqn::period_blowingup_ratio_1})=(\ref{eqn::period_blowingup_ratio_2})$ implies that $P=0$ so that $\mu$ cannot be fully supported.
  Otherwise, $r=2$ and we get $(1-s)P^2=Q^2$.
  Even if for some Markov measure on $X$ we have $(1-s)P^2=Q^2$, modifying the value of $s$ a little we obtain a Markov measure $\mu$ on $X$ with $(\ref{eqn::period_blowingup_ratio_1})\ne(\ref{eqn::period_blowingup_ratio_2})$, since $P$ and $Q$ change linearly with respect to $s$ while $(\ref{eqn::period_blowingup_ratio_1})=(\ref{eqn::period_blowingup_ratio_2})$ is a quadratic expression of $P$ and $Q$.
  Now, by the continuity of linear algebraic operations, we are able to construct a fully supported Markov measure $\mu$ on $X$ with $(\ref{eqn::period_blowingup_ratio_1})\ne(\ref{eqn::period_blowingup_ratio_2})$,
  finishing the proof.
\end{proof}

\section{When factor maps are not bi-continuing}
  
In the present section, we show that factor maps does not preserve Gibbsian property if it is not bi-continuing.
Throughout the section $\pi$ is always a 1-block factor map from a two-sided 1-step mixing shift of finite type $X$ over $\mathcal{A}$ onto a sofic shift $Y$ over $\mathcal{A}$.

First, we are going to reduce the non-continuing property to periodic points.
Given a two-sided sequence $y$ we introduce the notions of its initial and eventual class degrees.
Consider transitions and class degrees over one-sided sequences in a natural way:
for left and right infinite sequences, left and right transitions and class degrees, respectively, are well-defined similarly as before.
The {\em initial} and {\em eventual} class degrees of $y$ are defined to be $\lim_{n\to-\infty}d_\pi^l(y|_{(-\infty,n]})$ and $\lim_{n\to\infty}d_\pi^r(y|_{[n,\infty)})$, respectively.
If $y$ has eventual class degree $d$,
then there are preimages $x_1,\cdots,x_d$ of $y$ and $N$ in $\mathbb{Z}$
such that each preimage of $y|_{[n,\infty)}$ is right equivalent to some $x_j|_{[n,\infty)}$, $1\le j\le d$, $d=d_\pi^r(y|_{[n,\infty)})$ and $x_i|_{[n,\infty)}\not\sim^rx_j|_{[n,\infty)}$, $i\ne j$, for any $n\ge N$.
Similar things happen when $y$ has initial class degree $d$.

A point $x$ is called {\em eventually} periodic if for some $n$ in $\mathbb{Z}$ $x|_{[n,\infty)}$ is periodic, and is called {\em initially} periodic if for some $n$ in $\mathbb{Z}$ $x|_{(-\infty,n]}$ is periodic.

For $y$ in $Y$ let $\omega^+(y)$ denote the $\omega$-limit set of $y$, that is, the set of the limit points of $\{\sigma^j\mid j\ge0\}$.
Let $\omega^-(y)$ denote the set of the limit points of $\{\sigma^j(y)\mid j<0\}$.

\begin{lemma}\label{lmm::upper_bound_for_length_of_subword_with_same_degree}
  There is $N>0$ such that any word $u$ of length $n>N$ in $\mathcal{B}(Y)$ has a subword $w$ of length $N$ with $d_\pi(w)=d_\pi(u)$.
\end{lemma}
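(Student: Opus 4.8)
The plan is to deduce the lemma from one structural fact: the quantity $d_\pi(w)$ minimises over coordinates is controlled by a bounded amount of ``transition'' data, which is eventually periodic along $w$. For $v\in\mathcal B(Y)$ let $T(v)\subseteq\mathcal A\times\mathcal A$ be the set of pairs $(e,f)$ for which some preimage of $v$ begins with $e$ and ends with $f$. Since $X$ is $1$-step and $\pi$ is $1$-block, two preimage paths that agree at a coordinate glue there, so $T$ is multiplicative under overlapping concatenation, $T(v_1v_2)=T(v_1)\circ T(v_2)$ (relational composition), and $T$ takes values in the finite semigroup $S\subseteq 2^{\mathcal A\times\mathcal A}$. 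The first step is to check that for $1<n<|w|$ the least size $\gamma_n(w)$ of a set $M$ for which $w$ is fiber-routable through $M$ at $n$ (Definition~\ref{defn::routable_blocks}) depends only on the pair $\bigl(T(w|_{[1,n]}),\,T(w|_{[n,|w|]})\bigr)$: again by the $1$-step property, the set of coordinate-$n$ letters of preimages of $w$ running from $e$ to $f$ equals $\{c:(e,c)\in T(w|_{[1,n]})\}\cap\{c:(c,f)\in T(w|_{[n,|w|]})\}$, and $\gamma_n(w)$ is the transversal number of this set system over the valid pairs $(e,f)$, i.e.\ over $(e,f)\in T(w)=T(w|_{[1,n]})\circ T(w|_{[n,|w|]})$; thus $d_\pi(w)=\min_{1<n<|w|}\gamma_n(w)$ is determined by the sequence of these pairs.

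Next, using that an extension of a block never has larger depth and a subword never smaller depth (recalled earlier), a length-$N$ subword $w$ of $u$ realises $d_\pi(u)$ exactly when $d_\pi(w)\le d_\pi(u)$, and it suffices to bound the length of the \emph{critical} words --- those $u$ for which $u$ with its first, or its last, letter deleted already has strictly larger depth; every word shrinks letter-by-letter to a critical subword of the same depth, and extending a short such subword back out to length $N$ preserves its depth. So let $u$ be critical with $d_\pi(u)=d$ attained at an internal coordinate $p$. If $|u|$ exceeds $2|S|$, at least one of the one-sided parts $u|_{[1,p]}$, $u|_{[p,|u|]}$ has length $>|S|$; running $a$ downward from $p$, the value $T(u|_{[a,p]})$ walks in $S$ (each step left-composes with a two-letter relation), so once that part is longer than $|S|$ the walk is in its cycle and one finds $a<p$ with $p-a$ bounded by a multiple of $|S|$ and $T(u|_{[a,p]})=T(u|_{[1,p]})$; symmetrically one finds such a $b>p$ with $T(u|_{[p,b]})=T(u|_{[p,|u|]})$ (a part of length $\le|S|$ is kept entire). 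Then $w:=u|_{[a,b]}$ is an honest subword of $u$ whose length is bounded by a fixed multiple $C|S|$ of $|S|$, and $\gamma_p(w)=\gamma_p(u)=d$ by the first paragraph, whence $d_\pi(w)\le d$; with $d_\pi(w)\ge d_\pi(u)=d$ this gives $d_\pi(w)=d$. Hence if $|u|>C|S|$ then $w$ is a \emph{proper} subword of $u$ realising $d$, contradicting criticality; so critical words, and the required $N$, are bounded by $C|S|\le C\cdot 2^{|\mathcal A|^2}$.

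I expect the load-bearing step to be the first paragraph --- the reduction of fiber-routability at a coordinate to the pair of one-sided transition relations, which is exactly where $X$ being $1$-step is used --- together with the routine but fiddly task of keeping the truncated coordinate $p$ interior to $w$: if the pigeonhole returns $a=p$ (resp.\ $b=p$) one passes to the previous (resp.\ next) occurrence of the relevant value in the cycle, at the cost of enlarging the constant $C$. The only other ingredient is the elementary fact that a walk of length exceeding $|S|$ in the finite semigroup $S$ has reached its cycle.
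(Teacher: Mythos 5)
Your first paragraph is correct and is a nice reduction: since $X$ is $1$-step and $\pi$ is $1$-block, two preimage paths that agree at coordinate $n$ do glue there, so the set of $n$-th letters of preimages of $w$ running from $e$ to $f$ is exactly $\{c:(e,c)\in T(w|_{[1,n]})\}\cap\{c:(c,f)\in T(w|_{[n,|w|]})\}$, and $\gamma_n(w)$ is determined by the pair $\bigl(T(w|_{[1,n]}),T(w|_{[n,|w|]})\bigr)$. The reduction to critical words is also fine. This is, for the record, nothing like the paper's argument, which is a two-line compactness proof: negate the statement, extract a limit point $y$ every finite subword of which is strictly deeper than the ambient words, and contradict Theorem~\ref{thm::depths_and_degree}; that route gives no explicit $N$, whereas yours would give $N\le C\cdot 2^{|\mathcal{A}|^2}$ if it worked.

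It does not work, and the failure is the pigeonhole step. The sequence $R_a:=T(u|_{[a,p]})$, $a=p,p-1,\dots,1$, is not the orbit of a fixed self-map of $S$: the step from $R_a$ to $R_{a-1}$ is left-composition with $T(u|_{[a-1,a]})$, which varies with $a$. Such a walk has no ``cycle'', and its terminal value $R_1=T(u|_{[1,p]})$ need not occur at any $a$ with $p-a$ bounded. Concretely, take $\mathcal{A}=\{a,b,c\}$ with $\pi(a)=\pi(b)=0$, $\pi(c)=1$ and allowed transitions $a\to a$, $a\to b$, $b\to b$, $b\to c$, $c\to a$, $c\to b$ (a mixing $1$-step SFT); for $u|_{[1,p]}=10^{p-1}$ one gets $R_a=\{(a,a),(a,b),(b,b)\}$ for all $2\le a\le p-1$ but $R_1=\{(c,a),(c,b)\}$, so the only $a$ with $R_a=R_1$ is $a=1$. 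Without $T(u|_{[a,p]})=T(u|_{[1,p]})$ (and its mirror on the right) the truncation loses control in exactly the direction you cannot afford: $u|_{[a,b]}$ has in general strictly more preimages than the restrictions of preimages of $u$, and each set $S^w_{e',f'}$ is only a piece of some $S^u_{e,f}$, so a $d$-element transversal for $u$ at $p$ need not be one for $w$ and $\gamma_p(w)$ can exceed $\gamma_p(u)$. Hence ``critical words have length $\le C|S|$'' is not established. I also do not see a local repair: the short subword witnessing $d_\pi(u)$ may have to realise its depth at a different coordinate than $p$, and your bookkeeping is anchored to $p$; note too that excising a repeated pair $(R_{a_1},\,\cdot\,)=(R_{a_2},\,\cdot\,)$ is a pumping move that produces a word which is no longer a subword of $u$, so that escape route is closed as well.
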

\begin{proof}
  Suppose not.
  Given any $N$ there is a word $u_n$ in $\mathcal{B}_n(Y)$ for some $n>N$ such that all the subwords of $u_n$ are strictly shallower than $u_n$.
  By compactness, we get a point $y$ in $Y$ with $d_\pi^r(y)>d_\pi(y|_{[m,n]})$ for all $m\le n$ in $\mathbb{N}$.
  It clearly contradicts Theorem \ref{thm::depths_and_degree}.
\end{proof}

\begin{proposition}\label{prop::periodic_continuing}
  The following are equivalent:
  \begin{enumerate}
    \item\label{itm::not_right_continuing} $\pi$ is not right continuing;
    \item\label{itm::not_right_continuing_on_periodic_points} There is $x$ in $X$ such that $\pi(x)$ is left asymptotic to an initially periodic point $y$ in $Y$ but no point left asymptotic to $x$ is sent to $y$;
    \item\label{itm::initial_and_left_class_degrees_disagree} There is $y$ in $Y$ with strictly greater initial class degree than left class degree.
  \end{enumerate} 
\end{proposition}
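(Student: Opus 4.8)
The plan is to prove $(2)\Rightarrow(1)$, $(1)\Rightarrow(3)$ and $(3)\Rightarrow(2)$. The first is immediate: $(2)$ is precisely non-right-continuity witnessed by a pair $(x,y)$ in which, in addition, $y$ is initially periodic, so it is a special case of $(1)$. The other two rest on one basic observation about left classes, which I will call the \emph{restriction lemma}: for $m<n$ (possibly $n=\infty$, i.e.\ starting from the two-sided $y$) the map $z\mapsto z|_{(-\infty,m]}$ induces a \emph{well-defined injection} $\llbracket y|_{(-\infty,n]}\rrbracket^l\hookrightarrow\llbracket y|_{(-\infty,m]}\rrbracket^l$ (resp.\ $\llbracket y\rrbracket^l\hookrightarrow\llbracket y|_{(-\infty,m]}\rrbracket^l$). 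Well-definedness comes from restricting left bridges; injectivity from the reverse operation, extending a left bridge over the extra coordinates by copying the coordinates of the relevant endpoint (this is where $X$ being $1$-step and $\pi$ being $1$-block enters). Consequently $d_\pi^l(y)\le d_\pi^l(y|_{(-\infty,n]})$ for every $n$, the quantity $d_\pi^l(y|_{(-\infty,n]})$ is non-decreasing as $n\to-\infty$ and bounded (by $|\mathcal A|$), and the initial class degree of $y$ exists and is at least $d_\pi^l(y)$.

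Call a left class $C$ of $y|_{(-\infty,0]}$ \emph{good} if some $z\in C$ extends rightward to a point of $\pi^{-1}(y)$; by the restriction lemma the good classes are exactly the image of $\llbracket y\rrbracket^l\hookrightarrow\llbracket y|_{(-\infty,0]}\rrbracket^l$. Also, two preimages of $y|_{(-\infty,0]}$ agreeing on a left tail $(-\infty,k]$ are left equivalent (build the required left bridges by cutting at $k$ and pasting). For $(1)\Rightarrow(3)$, take a witness $(x,y)$ of non-right-continuity, shifted so that $\pi(x)$ and $y$ agree on $(-\infty,0]$. If $[x|_{(-\infty,0]}]^l$ were good, say $z\sim^l x|_{(-\infty,0]}$ with $z$ extended by a tail $t$ into $\pi^{-1}(y)$, then threading $t$ along the left bridges realising $z\to^r$--er--$z\to^l x|_{(-\infty,0]}$ (each of them agrees with $z$ at coordinate $0$, so $t$ attaches) yields preimages of $y$ left asymptotic to $x$, contradicting the choice of $(x,y)$. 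Hence $[x|_{(-\infty,0]}]^l$ is \emph{bad}, the injection is not onto, and $d_\pi^l(y)<d_\pi^l(y|_{(-\infty,0]})\le{}$ the initial class degree of $y$, which is $(3)$. (The converse $(3)\Rightarrow(1)$ is the same argument run backwards: for $m$ very negative $d_\pi^l(y|_{(-\infty,m]})$ equals the initial class degree, which exceeds $d_\pi^l(y)$, so $y|_{(-\infty,m]}$ carries a bad class, and extending a member of it to a point of $X$ produces a non-right-continuity witness.)

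For $(3)\Rightarrow(2)$ one must moreover arrange the point to be initially periodic. Start as above: after a shift, $w:=y|_{(-\infty,0]}$ carries a bad left class $C$ relative to $y$. Badness is a \emph{finitary} condition: by König's lemma there is $M$ such that no symbol of $C|_0$ begins an $X$-path of length $M{+}1$ mapping onto $r:=y|_{[0,M]}$. The aim is to replace $w$ by an initially periodic left ray $w'$ with $w'|_0=w|_0$ still carrying a left class $C'$ with $C'|_0\subseteq C|_0$; then $y':=w'$ followed by $r$ followed by any admissible continuation is initially periodic with $C'$ bad relative to it, and extending a member of $C'$ rightward to a point $x'\in X$ (possible since $X$ is mixing) gives the pair $(x',y')$ demanded by $(2)$ — no preimage of $y'$ can be left asymptotic to $x'$, else $C'$ would be good. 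To produce $w'$, read $w$ from left to right and track the finite datum consisting of the state in a fixed finite presentation of $Y$, the current symbol, and the left-class structure of $\pi$ over the current left ray (its list of left classes, each tagged by its set of terminal symbols together with the relevant transition data); by the restriction lemma this datum evolves deterministically under reading symbols and takes only finitely many values, so it recurs at two positions $p_1<p_2\le0$, and one splices the loop $w|_{(p_1,p_2]}$ to obtain the candidate $w'$.

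The main obstacle is exactly this last step: one must guarantee that the spliced periodic prefix realises the \emph{recorded} class structure at the cut — so that $C'$ genuinely appears with $C'|_0\subseteq C|_0$ — and that $w'$ lies in $Y$. Because the class structure along $w$ need not itself stabilise (only its number of classes does), the splicing must be chosen so that the periodic prefix settles into exactly the structure $w$ had at the cut rather than a larger one; I would arrange this by enlarging the pigeonhole datum and appealing to the structure theory for (eventually) periodic points in \cite{AllQ13,AllHJ15E} together with Lemma \ref{lmm::upper_bound_for_length_of_subword_with_same_degree}, in the spirit of the reduction to periodic points already carried out in Proposition \ref{prop::continuing_transitioncal_case}. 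The remaining steps are routine bridge bookkeeping.
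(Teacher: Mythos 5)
Your treatment of $(2)\Rightarrow(1)$ and $(1)\Rightarrow(3)$ is sound and essentially the paper's: the paper also observes that a witness of non-right-continuity yields a preimage of $y|_{(-\infty,N]}$ not left equivalent to the restriction of any preimage of $y$, and your explicit ``restriction lemma'' (restriction induces an injection on left classes, so $n\mapsto d_\pi^l(y|_{(-\infty,n]})$ is non-decreasing as $n\to-\infty$) is a welcome justification of why the extra class survives into the limit defining the initial class degree. Your finitary reformulation of ``badness'' via K\"onig's lemma and the endgame of $(3)\Rightarrow(2)$ (appending the finite witness $r=y|_{[0,M]}$ immediately after the cut) are also correct, granted the ray $w'$ you want.

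The gap is exactly where you flag it, and it is not ``routine bridge bookkeeping'': producing an initially periodic left ray $w'$ that still carries a left class $C'$ with $C'|_0\subseteq C|_0$. The datum you propose to pigeonhole --- ``the left-class structure of $\pi$ over the current left ray'' --- is not a finite quantity that evolves deterministically under reading one symbol: the partition of $\pi^{-1}(y|_{(-\infty,n]})$ into left classes is determined by the entire infinite past, not by the terminal symbols plus any bounded tag, and (as you note) it need not stabilise along $w$. Splicing a loop between two positions at which your tag recurs can therefore merge or create classes in the periodic ray, and in particular can make the intended $C'$ good. The paper closes precisely this hole with the marked-block/depth machinery rather than a raw pigeonhole: by the left-transition analogue of Theorem \ref{thm::depths_and_degree} together with Lemma \ref{lmm::upper_bound_for_length_of_subword_with_same_degree}, the left tail of $y$ contains a \emph{recurring} word $y|_{[m',n']}=y|_{[m,n]}$ of depth $d'$ that is fiber-routable through the transversal $M=\{x_1|_{m'+k},\dots,x_{d'}|_{m'+k}\}$, with the representatives' symbols matching at the two occurrences; periodizing between the two occurrences then forces \emph{every} preimage of the periodic ray to route through $M$ at each period, which pins the class structure of $y'$ to that of the chosen representatives and makes the ``bad'' class persist (the paper still needs a short hop argument to rule out a preimage that routes through a good symbol at one period and a bad one at the next). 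Without invoking this fiber-routability of a depth-$d'$ recurrent word --- which you only gesture at via ``structure theory'' --- the construction of $w'$ is unproven, so $(3)\Rightarrow(2)$ is incomplete as written.
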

\begin{proof}
  (\ref{itm::not_right_continuing_on_periodic_points})$\implies$ (\ref{itm::not_right_continuing}): Trivial by definition of continuing property.

  (\ref{itm::not_right_continuing})$\implies$ (\ref{itm::initial_and_left_class_degrees_disagree}):
  There is $x$ in $X$ such that $\pi(x)$ is left asymptotic to a point $y$ in $Y$ but no point left asymptotic to $x$ is sent to $y$.
  Let $N=\max\{n\mid \pi(x)|_{(-\infty,n]}=y|_{(-\infty,n]}\}$.
  Then given any preimage $x'$ of $y$ there is no bridge from $x|_{(-\infty,N]}$ to $x'|_{(-\infty,N]}$
  since such a bridge would make a preimage of $y$ left asymptotic to $x$.
  Hence we have at least one more preimage of $y|_{(-\infty,N]}$ which is not left equivalent to $x'|_{(-\infty,N]}$ for any preimage $x'$ of $y$.
  Immediately, the inital class degree of $y$ is strictly greater than $d_\pi^l(y)$.

  (\ref{itm::initial_and_left_class_degrees_disagree})$\implies$ (\ref{itm::not_right_continuing_on_periodic_points}):
    Let $d=d_\pi^l(y)$.
    Let $d'$ be the initial class degree of $y$ and assume $d'>d$.
    Take preimages $x_1,\cdots,x_d$ of $y$, $x_{d+1},\cdots,x_{d'}$ in $X$ and $N$ in $\mathbb{Z}$
    such that $\pi(x_1)|_{(\infty,-N]}=\cdots=\pi(x_{d'})|_{(\infty,-N]}=y|_{(\infty,-N]}$ and $x_i|_{(\infty,-N]}\not\sim^lx_j|_{(\infty,-N]}$ for any $n\le N$ and $1\le i<j\le d'$.
    
    By an analogue of Theorem \ref{thm::depths_and_degree} for left transitions,
    there are infinitely many $n<N$ and $m<n$ such that $y|_{[m,n]}$ has depth $d'$.
    By Lemma \ref{lmm::upper_bound_for_length_of_subword_with_same_degree} we may bound $m-n$ from above.
    Then there must be a recurrent word in $y|_{(-\infty,N]}$ with depth $d'$.
    Find $m'<n'<m<n<N$ such that $y|_{[m',n']}=y|_{[m,n]}$ has depth $d'$.
    Also we may assume that $y|_{[m',n']}=y|_{[m,n]}$ is fiber-routable through
    \[ M=\{x_1|_{m'+k}=x_1|_{m+k},\cdots,x_{d'}|_{m'+k}=x_{d'}|_{m+k}\} \]
    at some $1<k<n'-m'=n-m$.
    
    Shifting $y$, assume $m'=0$.
    Let
    \[ y'=\begin{cases}
      y|_i&\text{if }i\ge m\\
      y|_{i\mod m}&\text{otherwise}
    \end{cases}
    \text{ and }
    x_j'=\begin{cases}
      x_j|_i&\text{if }i\ge m\\
      x_j|_{i\mod m}&\text{otherwise}
    \end{cases} \]
    for $1\le j\le d'$.
    Clearly $x_j',1\le j\le d'$, and $y'$ are initially periodic and $x_1',\cdots,x_d'$ are preimages of $y'$.
    
    Suppose that a preimage $x'$ of $y'$ is left asymptotic to $x_{d'}'$.
    As $y'|_{[m,n]}=y|_{[m,n]}$ is fiber-routable through $M=\{x_1'|_k,\cdots,x_{d'}'|_k\}$ at $m+k$,
    there is $x_j|_{m+k},1\le j\le d'$ through which $x'|_{[m,n]}$ is routable at $m+k$.
    Assume $x'|_{m+k}=x_s'|_{m+k}=x_s|_{m+k}$.
    If such $s$ is greater than $d$, then $x_s|_{(-\infty,m+k)}x'|_{[m+k,\infty)}$ is a preimage of $y$ left asymptotic to $x_s$,
    which is a contradiction.
    Otherwise, there is $q$ in $\mathbb{Z}$ such that $x'|_{(q-1)m+k}$ meets $x_s'|_{(q-1)m+k}$ for some $s>d$ but $x'|_{qm+k}$ does $x_t'|_{qm+k}$ for some $1\le t\le d$.
    Then $x_s|_{(-\infty,k)}x'|_{[(q-1)m+k,qm+k]}x_t|_{(k,\infty)}$ is a preimage of $y$ left asymptotic to $x_s$ while $s>d$, which is absurd again.    
    
    Hence, for $s>d$ we do not have a preimage of $y'$ left asymptotic to $x_s'$.
\end{proof}

A similar result holds for left continuing factor maps and their eventual class degrees.
  
\begin{lemma}\label{lmm::ratio_of_ergodic_sum}
  Let $\mu$ be a Gibbs measure on a 1-step mixing shift of finite type $X$ for a normalized potential $f$ and Gibbs constants $K_1<K_2$. Let $x$ and $x'$ be in $X$ with $x|_{[0,n-1]}=x'|_{[0,n-1]}$ for some $n\ge0$. Then
  \[ \frac{K_1}{K_2}<\frac{\exp S_0^{n-1}f(x)}{\exp S_0^{n-1}f(x')}<\frac{K_2}{K_1} \]
\end{lemma}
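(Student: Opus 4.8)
The plan is entirely elementary: the content of the lemma is just that the normalized Gibbs inequality pins down $\exp S_0^{n-1}f$ up to the multiplicative constants $K_1,K_2$, while agreement of $x$ and $x'$ on the coordinates $[0,n-1]$ kills the dependence on the measure.

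First I would note that, since $x|_{[0,n-1]}=x'|_{[0,n-1]}$, the two cylinders $[x|_{[0,n-1]}]_0$ and $[x'|_{[0,n-1]}]_0$ are the same subset of $X$; in particular $\mu[x|_{[0,n-1]}]_0=\mu[x'|_{[0,n-1]}]_0$, and this common value is a fixed positive number $c$ (positive because $\mu$ is Gibbs and $x|_{[0,n-1]}\in\mathcal{B}(X)$). Next I would invoke the defining inequality (\ref{eqn::gibbs_inequality}) for $\mu$, using that $f$ is normalized so that $P=0$: applied to $x$ it gives
\[ \frac{c}{K_2}<\exp S_0^{n-1}f(x)<\frac{c}{K_1}, \]
and applied to $x'$ it gives exactly the same bounds for $\exp S_0^{n-1}f(x')$. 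Dividing the upper bound for $\exp S_0^{n-1}f(x)$ by the lower bound for $\exp S_0^{n-1}f(x')$, and symmetrically, the factor $c$ cancels and yields
\[ \frac{K_1}{K_2}<\frac{\exp S_0^{n-1}f(x)}{\exp S_0^{n-1}f(x')}<\frac{K_2}{K_1}, \]
which is the assertion. The degenerate case $n=0$ needs no argument at all: both Birkhoff sums are empty, so the ratio equals $1$, and $1\in(K_1/K_2,K_2/K_1)$ because $K_1<K_2$.

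There is no genuine obstacle here; the only points worth keeping in mind are that $K_1,K_2$ are constants independent of $x$, $x'$ and $n$ (this is built into Definition \ref{defn::gibbs_measure}), so the bound obtained is uniform, and that one uses $P=0$ only for notational economy — the factors $e^{-nP}$ would cancel in the quotient regardless, so the statement in fact holds for any Gibbs potential, normalized or not.
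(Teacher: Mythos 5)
Your proposal is correct and follows essentially the same route as the paper: both write down the Gibbs inequality $\mu[x|_{[0,n-1]}]_0/K_2<\exp S_0^{n-1}f(x)<\mu[x|_{[0,n-1]}]_0/K_1$ for $x$ and $x'$, observe that the two cylinder measures coincide, and divide. Your added remarks on the $n=0$ case and the irrelevance of normalization are harmless refinements of the same argument.
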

\begin{proof}
  Immediate from
  \[ \frac{\mu[x|_{[0,n-1]}]_0}{K_2}<\exp S_0^{n-1}f(x)<\frac{\mu[x|_{[0,n-1]}]_0}{K_1} \]
  and
  \[ \frac{\mu[x'|_{[0,n-1]}]_0}{K_2}<\exp S_0^{n-1}f(x')<\frac{\mu[x'|_{[0,n-1]}]_0}{K_1}. \]
\end{proof}

An example is presented to illustrate typical behaviour in non-continuing cases.
  
\begin{figure}
  \begin{tikzpicture}[->]
    \node[circle,draw,inner sep=7pt] (I) at (0,2) {};
    \node[circle,draw,inner sep=7pt] (J) at (4,2) {};
    \node[circle,draw,inner sep=7pt] (K) at (0,0) {};
    \node[circle,draw,inner sep=7pt] (L) at (4,0) {};
      
    \draw (0.35,2.15) to (2,2.15) node[above] {$2$} to (3.65,2.15);
    \draw (0.15,0.35) to (0.15,1) node[right] {$1$} to (0.15,1.65);
    \draw (-0.15,1.65) to (-0.15,1) node[left] {$3$} to (-0.15,0.35);
    \draw (3.65,1.85) to (2,1.85) node[below] {$1$} to (0.35,1.85);
    \draw (4.15,0.35) to (4.15,1) node[right] {$2$} to (4.15,1.65);
    \draw (3.85,1.65) to (3.85,1) node[left] {$3$} to (3.85,0.35);
    \draw (4.25,1.65) arc (225:495:0.45) node[below right] {$2$};
    \draw (-0.25,2.35) arc (45:315:0.45) node[above left] {$1$};
    \draw (4.25,-0.35) arc (225:495:0.45) node[below right] {$f/3$};
    \draw (-0.25,0.35) arc (45:315:0.45) node[above left] {$e/3$};
  \end{tikzpicture}
  \caption{Neither left nor right continuing factor map}\label{fig::non-continuing}
\end{figure}
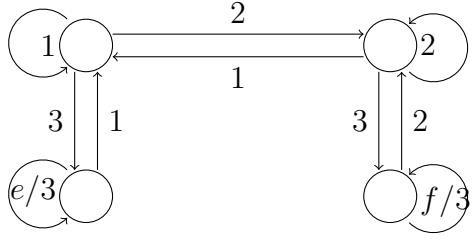

\begin{example}\cite{Kem11}\label{eg::non-continuing}
  The factor map presented in Figure \ref{fig::non-continuing} is neither left nor right continuing:
  $3^\infty$ has two right classes $[e^\infty]^r$ and $[f^\infty]^r$ but $e$ has no following edge labelled 2 while $f$ is followed by an edge labelled 2,
  which shows that $\pi$ is not right continuing.
  In a similar way, $\pi$ is easily shown to be not left continuing.
  
  Define a 1-step fully supported Markov measure $\mu$ on $X$ by putting transition probabilities $p>q$ on $e$ and $f$, respectively, and let $\nu=\pi\mu$.
  Then for some constant $C>0$ $\nu[3^n2]_0/\nu[3^{n+1}]_0$ is smaller than $Cq^n/p^{n+1}$ and goes to 0 as $n$ increases.
  Suppose that $\nu$ is a Gibbs measure for some normalized potential $f$ and Gibbs constants $K_1,K_2>0$. By Lemma \ref{lmm::ratio_of_ergodic_sum} and the inequality (\ref{eqn::gibbs_inequality}), we have
  \[ \frac{\nu[3^n2]_0}{\nu[3^{n+1}]_0 }>\frac{K_1^2}{K_2^2}\exp(\max_xf(x)-\min_xf(x)), \]
  which is absurd.
  Hence, $\nu$ is not a Gibbs measure for any $f$.
\end{example}
  
\begin{proposition}\label{prop::noncontinuing_case}
  Let $\pi$ be not bi-continuing.
  Then $X$ admits a fully supported Markov measure which is not sent to a Gibbs measure by $\pi$.
\end{proposition}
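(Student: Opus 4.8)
The plan is to follow Example~\ref{eg::non-continuing}: find a periodic point of $Y$, a right transition class over it carrying almost all of the mass, and a one-letter ``escape'' that this class cannot make, and then compare $\nu$-measures of two cylinders with a common long prefix to break the Gibbs inequality. First I would reduce to the case that $\pi$ is not right continuing, the not-left-continuing case being symmetric via the left analogue of Proposition~\ref{prop::periodic_continuing} (all the relevant notions --- mixing SFT, fully supported Markov measure, sofic shift, Gibbs measure --- being invariant under reversing the time direction). By Propositions~\ref{prop::transitional_periodic_case} and~\ref{prop::period_blowingup_case} I may also assume that $\pi$ admits no transition between distinct right classes over a periodic point and that no periodic point of $Y$ has period smaller than a right class over it; in particular $C=C^\ast$ for every right class $C$ over a periodic point, as in the proof of Proposition~\ref{prop::period_blowingup_case}.

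Next I would reduce the failure of right continuity to a periodic, letter-level form. By Proposition~\ref{prop::periodic_continuing} there are $x$ in $X$ and an initially periodic $y$ in $Y$ with $\pi(x)$ left asymptotic to $y$ but no point left asymptotic to $x$ sent to $y$. Running the periodization argument from the proof of that proposition, periodizing the forward tail as well, and then passing to the conjugate factor map of Lemma~\ref{lmm::local_retract} for the resulting periodic point (which affects neither the Markov nor the Gibbs property) so that $\pi^{-1}(z)|_I=\pi^{-1}(z|_I)$ on every interval $I$, I expect to obtain a periodic point $z$ in $Y$ of period $q$, a right class $C$ over $z$, and a letter $a$ such that $z|_{[0,m-1]}a\in\mathcal{B}(Y)$ for all $m$ in a fixed residue class modulo $q$, while $C$ cannot be continued by $a$ at those positions: no block in $C|_{[0,m-1]}$ extends in $X$ to $wb$ with $\pi(b)=a$. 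Extracting this clean statement from the asymptotic, one-sided escape supplied by Proposition~\ref{prop::periodic_continuing} --- in particular turning it into a class-and-letter obstruction over a two-sidedly periodic point --- is the technical heart, and the step I expect to be the main obstacle.

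With this in hand, I would use Lemma~\ref{lmm::complexity_of_periodic_class_realized} and Remark~\ref{rmk::complexity_of_periodic_class_realized} to choose a fully supported $1$-step Markov measure $\mu$ on $X$ whose complexity along $C$ dominates that of every other right class over $z$: $\mu[C|_{[0,qn-1]}]_0\asymp\lambda^n$ and $\mu[E|_{[0,qn-1]}]_0=O(\rho^n)$ for each right class $E\neq C$ over $z$, with $0<\rho<\lambda<1$. Put $\nu=\pi\mu$. Since $E=E^\ast$ for all these classes and $\pi^{-1}(z)|_I=\pi^{-1}(z|_I)$, we get $\nu[z|_{[0,qn-1]}]_0=\sum_E\mu[E|_{[0,qn-1]}]_0\asymp\lambda^n$. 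On the other hand any preimage of $z|_{[0,m-1]}a$ restricts on $[0,m-1]$ to a block of $\pi^{-1}(z)|_{[0,m-1]}=\bigcup_E E|_{[0,m-1]}$, and that block cannot lie in $C|_{[0,m-1]}$ since $C$ cannot be continued by $a$; hence $\nu[z|_{[0,m-1]}a]_0\le\mathrm{const}\cdot\sum_{E\neq C}\mu[E|_{[0,m-1]}]_0=O(\rho^{m/q})$. Therefore $\nu[z|_{[0,m-1]}a]_0/\nu[z|_{[0,m]}]_0\to0$ as $m\to\infty$ along the chosen residue class.

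Finally, suppose $\nu$ were a Gibbs measure for a normalized potential $g$ with Gibbs constants $K_1<K_2$. Fixing such an $m$ and choosing $\bar y\in[z|_{[0,m]}]_0$ and $\bar y'\in[z|_{[0,m-1]}a]_0$, which agree on the coordinates $[0,m-1]$, the Gibbs inequality~(\ref{eqn::gibbs_inequality}) and Lemma~\ref{lmm::ratio_of_ergodic_sum} give
\[
  \frac{\nu[z|_{[0,m]}]_0}{\nu[z|_{[0,m-1]}a]_0}
  \le\frac{K_2}{K_1}\cdot\frac{\exp S_0^{m}g(\bar y)}{\exp S_0^{m}g(\bar y')}
  \le\Big(\frac{K_2}{K_1}\Big)^{2}\exp\!\Big(\max_{Y}g-\min_{Y}g\Big),
\]
which is bounded in $m$ --- contradicting that its reciprocal tends to $0$. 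Hence $\nu$ is not a Gibbs measure for any potential, and (pulling $\mu$ back through the conjugacy of Lemma~\ref{lmm::local_retract}) $X$ carries the required fully supported Markov measure.
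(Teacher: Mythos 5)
Your overall strategy is the same as the paper's: invoke Proposition \ref{prop::periodic_continuing} to turn the failure of right continuity into a periodic ``escape'' that one distinguished right class cannot make, use Remark \ref{rmk::complexity_of_periodic_class_realized} to build a fully supported Markov measure in which that class carries the dominant mass, and then contradict the Gibbs inequality via Lemma \ref{lmm::ratio_of_ergodic_sum} by comparing the two cylinders sharing a long prefix. The final contradiction and the complexity estimates are set up correctly. However, you have left a genuine gap exactly where you say you expect one: the ``class-and-letter obstruction over a two-sidedly periodic point'' is never established, and without it the crucial estimate $\nu[z|_{[0,m-1]}a]_0=O(\rho^{m/q})$ has no justification, since a preimage of $z|_{[0,m-1]}a$ could a priori pass through $C|_{[0,m-1]}$ and contribute mass of order $\lambda^{m/q}$.

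It is worth noting that the paper does not need the full normalization you are aiming for. It takes the output of Proposition \ref{prop::periodic_continuing} as is: initially periodic points $x_1,x_2$ with $x_1|_{(-\infty,r]}=u^\infty\alpha$, $y|_{(-\infty,r]}=w^\infty\delta$, and no preimage of $y$ left asymptotic to $x_1$. The distinguished class is $C=[u^\infty]^r$ over the genuinely periodic point $w^\infty$, and the escape is the finite word $\delta$ (not a single letter, and not required to recur periodically): the non-continuing hypothesis is used only to conclude that no follower of $u$ maps to $\delta$, i.e.\ that $\pi^{-1}[w^n\delta]_0$ meets no orbit of $C$, which is precisely your missing obstruction. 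In other words, there is no need to periodize the forward tail or to pass through Lemma \ref{lmm::local_retract} at this stage; working with $w^n\delta$ versus $w^{n+r}$ directly sidesteps the step you identify as the main obstacle. (The paper also inserts one case distinction you omit: if $C$ and $C'=[v^\infty]^r$ lie on the same $\sigma$-orbit, the map reduces the period of $C$ and one falls back on Proposition \ref{prop::period_blowingup_case} rather than on the dominance argument.) As written, your proposal is an accurate outline of the paper's argument but is not a complete proof.
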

\begin{proof}  
  Assume that $\pi$ is not right continuing.
  By Proposition \ref{prop::periodic_continuing} there are initially periodic points $x_1,x_2$ in $X$
  such that $\pi(x_1)$ is left asymptotic to $y=\pi(x_2)$ but no left asymptotic point of $x_1$ is a preimage of $y$.
  Say $x_1|_{(-\infty,r]}=u^\infty\alpha,x_2|_{(-\infty,r]}=v^\infty\beta$ and $y|_{(-\infty,r]}=w^\infty\delta$
  where $|u|=|v|=|w|=p,|\alpha|=|\beta|=|\delta|=r$ and no follower of $u$ in $\mathcal{B}(X)$ is mapped to $\delta$.

  Let $C=[u^\infty]^r$ and $C'=[v^\infty]^r$.
  By Proposition \ref{prop::transitional_periodic_case} we may assume that $\pi$ does not admit a transition between distinct right classes over a periodic point of $Y$.
  Then $C^*=C$ and $C'^*=C'$.
  If $C$ and $C'$ are on the same orbit, then it implies that $\pi$ reduces the period of $C$ to the period of $y$ so that Proposition \ref{prop::period_blowingup_case} is applied to finish the proof.
  Thus we may assume that $C'$ is not the orbit of $C$.  

  Let $\sigma^jC\ne C'$ for $0< j<p$.
  Let $\mu$ be a fully supported Markov measure on $X$, found by Remark \ref{rmk::complexity_of_periodic_class_realized},
  such that $\lim_n\mu[C|_{[0,pn-1]}]_0/\lambda^n=K,\lim_n\mu[D|_{[0,pn-1]}]_0/\rho^n=K'$ for some positive real numbers $\rho<\lambda<1,K,K'$, and any right class $D$ not on the orbit of $C$ over $y$.
  Let $\nu=\pi\mu$.
  Consider $w^n\delta$ and $w^{n+r}$.
  For all $n\ge1$, $\nu[w^{n+r}]_0$ is larger than or equal to $\mu[C|_{[0,p(n+r)-1]}]_0$.
  On the other hand,
  \[ \nu[w^n\delta]_0<L\cdot\sum_{D\ne\sigma^kC\forall k}\mu[D|_{[0,pn-1]}]_0 \]
  for some constant $L>0$ since $\pi^{-1}[w^n\delta]_0$ meets no orbit of $C$.
  Then for some $M>0$ and large enough $n>0$ we have $\nu[w^n\delta]_0\le M\rho^n$.
  Thus $\nu[w^n\delta]_0/\nu[w^{n+r}]_0$ is smaller than $\rho^n/\lambda^n$ up to a constant and goes to 0 as $n$ increases to $\infty$.
    
  On the while, suppose that $\nu$ is a Gibbs measure for some normalized potential $f:Y\to\mathbb{R}$ and Gibbs constants $K_1<K_2$.
  Then
  \[ \nu[w^n\delta]_0>K_1\exp S_0^{pn-1}f(x)\exp S_0^{pr-1}f(\sigma^{pn}x) \]
  and
  \[ \nu[w^{n+r}]_0<K_2\exp S_0^{pn-1}f(x')\exp S_0^{pr-1}f(\sigma^{pn}x') \]
  where $x|_{[0,p(n+r)-1]}=w^n\delta$ and $x'|_{[0,p(n+r)-1]}=w^{n+r}$.
  By Lemma \ref{lmm::ratio_of_ergodic_sum} we have
  \[ \frac{\nu[w^n\delta]_0}{\nu[w^{n+r}]_0}>\frac{K_1^2}{K_2^2}\exp(pr(\max_xf(x)-\min_xf(x))) \]
  for all $n\ge1$.
  This contradicts the conclusion of the previous paragraph, therefore $\nu$ fails to be a Gibbs measure.
    
  If $\pi$ is not left continuing, then we consider $\delta w^n$ instead of $w^n\delta$.
  A symmetric argument completes the proof in the first case.
\end{proof}
      
\begin{corollary}\label{cor::transitional_case}
  If there is a point in $Y$ which admits a transition between distinct right classes,
  then $X$ admits a fully supported Markov measure which is not sent to a Gibbs measure by $\pi$.
\end{corollary}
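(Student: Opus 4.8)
The plan is to argue by a simple dichotomy on whether $\pi$ is bi-continuing, invoking in each branch one of the two propositions just established. So suppose $y$ in $Y$ admits a transition between distinct right classes, and consider the two cases.

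\textbf{Case 1: $\pi$ is bi-continuing.} Then the hypothesis on $y$ is exactly the hypothesis of Proposition \ref{prop::continuing_transitioncal_case}, so that proposition directly furnishes a fully supported Markov measure on $X$ whose image under $\pi$ fails to be Gibbs. (Internally, Proposition \ref{prop::continuing_transitioncal_case} passes to a bi-eresolving conjugate, closes up a finite portion of the transition data between $C\to^rD$ into a genuinely periodic point $y'$ carrying a transition between two right classes without collapsing the classes, and then reduces to Proposition \ref{prop::transitional_periodic_case}.)

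\textbf{Case 2: $\pi$ is not bi-continuing.} Then Proposition \ref{prop::noncontinuing_case} applies verbatim; note that in this branch the transition hypothesis on $y$ is not even used, failure of bi-continuity alone sufficing. (Internally, Proposition \ref{prop::noncontinuing_case} reduces the failure of, say, right continuity to a failure over initially periodic points via Proposition \ref{prop::periodic_continuing}, and then either lands in the period-reducing situation handled by Proposition \ref{prop::period_blowingup_case} or else exhibits directly a ratio $\nu[w^n\delta]_0/\nu[w^{n+r}]_0$ decaying geometrically in $n$, which contradicts the lower bound on such ratios forced by the Gibbs inequality as recorded in Lemma \ref{lmm::ratio_of_ergodic_sum}.)

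Since every $\pi$ belongs to exactly one of the two cases, the corollary follows. I do not expect any genuine obstacle here: all the substantive work is already contained in Propositions \ref{prop::continuing_transitioncal_case} and \ref{prop::noncontinuing_case}, and the only points to check are that the case split is exhaustive and that the hypotheses of the invoked proposition are met on each branch, both of which are immediate.
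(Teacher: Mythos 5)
Your proposal is correct and coincides with the paper's own proof, which is exactly the same two-case dichotomy: apply Proposition \ref{prop::continuing_transitioncal_case} when $\pi$ is bi-continuing and Proposition \ref{prop::noncontinuing_case} otherwise. The additional remarks on the internal workings of each proposition are accurate but not needed for the corollary itself.
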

\begin{proof}
  Apply Proposition \ref{prop::continuing_transitioncal_case} if $\pi$ is bi-continuing, or Proposition \ref{prop::noncontinuing_case} otherwise.
\end{proof}
  
\begin{corollary}\label{cor::multiple_class_degree}
  If $d_\pi>1$, then $X$ admits a fully supported Markov measure which is not sent to a Gibbs measure by $\pi$. 
\end{corollary}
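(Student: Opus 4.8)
The plan is to show that the hypothesis $d_\pi>1$ forces $\pi$ into at least one of the three situations already shown to destroy the Gibbs property: admitting a transition between distinct right classes over some point (Corollary \ref{cor::transitional_case}), reducing the period of some right class over a periodic point (Proposition \ref{prop::period_blowingup_case}), or failing to be bi-continuing (Proposition \ref{prop::noncontinuing_case}). Each of these already produces the required fully supported Markov measure, so it suffices to organise the case analysis.

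First I would clear away the two outer cases. If $\pi$ is not bi-continuing, Proposition \ref{prop::noncontinuing_case} applies immediately, so assume $\pi$ is bi-continuing and, up to topological conjugacy, bi-eresolving. Since $d_\pi>1$, Theorem \ref{thm::fiber-mixing_conditions} shows $\pi$ is not fiber-mixing; moreover $d_\pi=\min_w d_\pi(w)$ is attained at some word $w$, and by Theorem \ref{thm::depths_and_degree} together with Remark \ref{rmk::depths_and_degree_and_marked_blokcs} the point $y:=w^\infty$ is then a periodic point, of some period $q$, with exactly $d_\pi\ge2$ right classes $C_1,\dots,C_{d_\pi}$ over it. As $y$ is periodic each $C_i$ is $\sigma^q$-invariant, hence has period a multiple of $q$; if for some $i$ this period exceeds $q$, or more generally if $\pi$ reduces the period of any right class over any periodic point, then Proposition \ref{prop::period_blowingup_case} finishes the proof. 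So assume in addition that no such period reduction occurs, so that every $C_i$ has period exactly $q$, and also that $\pi$ admits no transition between distinct right classes over any point, since otherwise Corollary \ref{cor::transitional_case} would already be done.

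It remains to derive a contradiction from these combined assumptions. After applying Lemma \ref{lmm::local_retract} to $y$ and passing to a higher-block presentation, one may assume that for all $m\le n$ one has $\pi^{-1}(y)|_{[m,n]}=\pi^{-1}(y|_{[m,n]})=\bigsqcup_i C_i|_{[m,n]}$ with the $d_\pi$ pieces pairwise disjoint, and $C_i=C_i^*$ because there are no transitions. Right eresolvingness then forces any path in $X$ lying over the periodic pattern of $y$ and beginning in some $C_i$ to remain in $C_i$, so over $y$ the ``sheets'' $C_1,\dots,C_{d_\pi}$ are rigidly separated. On the other hand $X$ is mixing and $Y$ has more than one point, since otherwise $Y$ would be a single fixed point and one would get $d_\pi=1$; hence $X$ properly contains $\pi^{-1}(y)$, some path in $X$ must leave the periodic pattern of $y$ starting from within a sheet, wander through $X$, and re-enter $\pi^{-1}(y)$ with its phase adjusted to a multiple of $q$. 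Straightening such a re-entering path, using bi-eresolvingness on both sides of the wandering segment together with Lemma \ref{lmm::local_retract}, should yield either a bridge between two distinct right classes of a periodic point, contradicting the absence of transitions, or a right class over a periodic point whose period exceeds that of the point, contradicting the absence of period reduction. Either way one reaches a contradiction, so one of the three cases must occur and the corollary follows; the required measure is then supplied by Corollary \ref{cor::transitional_case}, Proposition \ref{prop::period_blowingup_case}, or Proposition \ref{prop::noncontinuing_case} as appropriate.

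I expect this last straightening step to be the main obstacle. Converting a connection that a priori exists only in the ambient mixing shift $X$ into one that is witnessed over a genuine periodic point of $Y$, and hence detects a transition or a period blow-up rather than merely some generic mixing in $X$, is precisely where the bi-eresolving hypothesis and the local-retract lemma must be combined with care, and it is the technical heart of the statement; controlling the phase of the re-entry (so that the resulting configuration again lies over a periodic point) is the delicate bookkeeping inside it.
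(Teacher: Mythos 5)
There is a genuine gap. Your outer case analysis (dispose of non-bi-continuing via Proposition \ref{prop::noncontinuing_case}, of period reduction via Proposition \ref{prop::period_blowingup_case}, of transitions via Corollary \ref{cor::transitional_case}) is exactly the paper's first sentence, and that part is fine. But the entire content of the corollary is the remaining claim that a bi-eresolving, period-preserving, transition-free factor map has class degree $1$, and you do not prove it: you end with ``straightening such a re-entering path \emph{should} yield either a bridge \dots or a period blow-up,'' and you yourself flag this step as the main obstacle. That is precisely the step that has to be carried out, and nothing in your sketch carries it out. Moreover, the intermediate assertions you lean on are not justified: the restrictions $C_i|_{[m,n]}$ of distinct right classes need not be pairwise disjoint even in the absence of transitions (finite blocks can be transient, i.e.\ shared between classes -- that is what Definition \ref{defn::marked_and_transient_blocks} is for), and right eresolvingness asserts that extensions \emph{exist}, not that every path over the periodic pattern starting in $C_i$ stays in $C_i$, so the ``rigidly separated sheets'' picture does not follow. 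There is also a smaller issue at the start: a word $w$ attaining $d_\pi=\min_w d_\pi(w)$ need not satisfy $w^\infty\in Y$, so some extension argument is needed before you can even name your periodic point.

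For comparison, the paper closes the gap constructively rather than by contradiction-via-recurrence: it fixes a symbol $a$ with preimages $a_1,\dots,a_d$, uses mixing of $X$ to find connecting paths $a_1\alpha_ja_j$, uses bi-eresolvingness to produce, for each $j$, a cycle $\beta_j$ over a power of $v_j=\pi(\alpha_j)a$ ending at $a_j$, and then shows that $\beta_1^\infty\alpha_ja_j\beta_j^\infty$ is a genuine bridge between the periodic points $\beta_1^\infty$ and $\beta_j^\infty$ (the only alternative being a period reduction, which was excluded). The no-transition hypothesis then upgrades these one-way bridges to two-way routability through a single symbol, and concatenating, the word $v=v_2^Nv_3^N\cdots v_d^N$ satisfies $\tau_\pi(v)=1$, contradicting $d_\pi>1$ via Theorem \ref{thm::fiber-mixing_conditions}/the identity $d_\pi=\min_w\tau_\pi(w)$. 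If you want to salvage your approach, this explicit construction of the periodic point of $Y$ and of the cycles over it is what your ``straightening'' step has to become.
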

\begin{proof}
  We may assume by Propositions \ref{prop::noncontinuing_case}, \ref{prop::period_blowingup_case} and Corollary \ref{cor::transitional_case} that $\pi$ is bi-eresolving, that $\pi$ preserves the periods of points
  and that no transition is admitted between right classes.
  We claim that such a factor map has class degree 1.

  Choose any $a$ in $\mathcal{A}$,
  let $\pi^{-1}(a)=\{a_1,\cdots,a_d\}$ and find a shortest path $\alpha_2$ with $a_1\alpha_2a_2$ allowed in $\mathcal{B}(X)$.
  As $\pi$ is bi-eresolving, all the other $a_j,j\ne1$, have respective predecessors and successors with the same image as $u=\pi(\alpha_2)$
  and, conversely, every preimage of $u$ has some preimages of $a$ as its preceding and succeeding symbols.
  So for each $a_j,1\le j\le d$ there is a cycle $\beta_j$ which is a preimage of a power of $v_2=ua$ and ends with $a_j$.

  Let $y=v_2^\infty$ and $x_j=\beta_j^\infty$, $1\le j\le d$.
  Consider $\beta_1^\infty\alpha_2a_2\beta_2^\infty$.
  Either it is a bridge from $x_1$ to $x_2$, so $x_1\to^rx_2$,
  or $\alpha_2a_2$ is a prefix of a power of $\beta_1$, a suffix of a power of $\beta_2$ and $x_2$ is a nontrivial shift of $x_1$.
  The latter case is in fact not allowed since such $\pi$ would reduce the periods of $[x_1]^r$.
  Only the former case is valid.
  Since no transition exists between right classes,
  $x_1\sim^rx_2$ and for some $N_2>0$ $\beta_1^{N_2}$ and $\beta_2^{N_2}$ are routable through a single symbol at some index.
  Immediately, all the preimages of $v_2^{N_2}$ following $a_1$ and $a_2$ are routable through a single symbol.
  
  For every $j=3,\cdots,d$, find $\alpha_j$ with $a_1\alpha_ja_j$ in $\mathcal{B}(X)$ and let $v_j=\pi(\alpha_j)a$.
  In a similar way as above, for some $N_j>0$ all the preimages of $v_j^{N_j}$ following $a_1$ and $a_j$ are routable through a single symbol at some index.
 
  Set $N:=\max_{2\le j\le d}N_j$ and consider $v=v_2^Nv_3^N\cdots v_d^N$.
  For any preimage $u$ of $v$ with $u|_{N|v_2|}=a_2$ there is a bridge from $u|_{[1,v_2^N]}$ to some power of $\beta_1$,
  since if $u$ follows $a_j$ then there is a bridge to $\beta_1$ from $\beta_j$.
  In turn, for any preimage $w$ of $v$ with $w|_{N|v_2v_3|}=a_3$, there is a bridge from $w|_{[1,v_2^Nv_3^N]}$ to some preimage of $v_2^Nv_3^N$ ending with $a_1$.
  In a similar way, from all the preimage of $v$ following $a_j$ $2\le j\le d$, there are bridges to some preimages of $v$ ending with $a_1$ and vice versa.
  Thus $\tau_\pi(v)=1<d_\pi$.
\end{proof}

\section{Remarks}

We have seen that a factor map from a mixing shift of finite type can preserve Gibbisian property only if it is bi-continuing, preserves the smallest periods of periodic right classes and admits no transition between distinct right classes.

\begin{definition}\label{defn::nearly_fiber-mixing}
  A factor map $\pi$ is said to be {\em nearly fiber-mixing} if it is bi-continuing, preserves the smallest periods of periodic right classes and admits no transition between distinct right classes.
\end{definition}

By Corollary \ref{cor::multiple_class_degree} a nearly fiber-mixing factor map always has class degree 1.
Fiber-mixing factor maps are nearly fiber-mixing and
finite-to-one nearly fiber-mixing factor maps are conjugacies.
In \cite{Pir18} it was shown that a fiber-mixing factor map sends a Gibbs measure with a H\"older continuous potential on a mixing shift of finite type to a Gibbs measure on its image shift.
Here we show that there exists actually a nearly fiber-mixing factor map which is not fiber-mixing.
  
\begin{figure}
  \begin{tikzpicture}[->]
    \node[circle,draw,inner sep=7pt] (I) at (0,0) {};
    \node[circle,draw,inner sep=7pt] (I') at (0,3) {};
    \node[circle,draw,inner sep=7pt] (J) at (3,1.5) {};
    \node[circle,draw,inner sep=7pt] (K) at (8,1.5) {};
      
    \draw (0.35,2.75) to (1.5,2.2) node[below] {$1$} to (2.65,1.65);
    \draw (0.35,0.25) to (1.5,0.8) node[above] {$1$} to (2.65,1.35);
    \draw (3.4,1.5) to (5.5,1.5) node[above] {$2$} to (7.6,1.5);
    \draw (7.6,1.7) to (4,2.45) node[above] {$0$} to (0.4,3.2);
    \draw (7.6,1.3) to (4,0.55) node[below] {$0$} to (0.4,-0.2);
    \draw (-0.25,0.35) arc (45:315:0.45) node[above left] {$a$};
    \draw (-0.25,0.45) arc (45:315:0.65) node[below left] {$b$};
    \draw (-0.25,3.35) arc (45:315:0.45) node[above left] {$a$};
    \draw (-0.25,3.45) arc (45:315:0.65) node[below left] {$b$};
  \end{tikzpicture}
  \caption{Nearly fiber-mixing factor map}\label{fig::nearly_fiber-mixing_factor_map}
\end{figure}
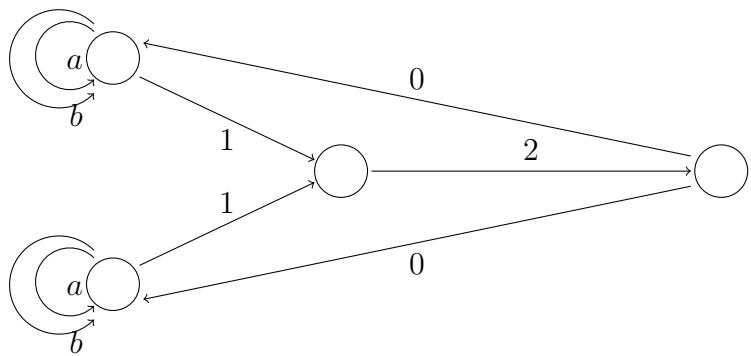
  
\begin{example}\label{eg::nearly_fiber-mixing_factor_map}
  Let $X$ be the underlying edge shift given in  Figure \ref{fig::nearly_fiber-mixing_factor_map}, $\pi$ the labelling map and $Y$ the sofic shift given by the labelled graph.
  Then $\pi$ is nearly fiber-mixing.
\end{example}

\bibliographystyle{amsplain}
\bibliography{ref}

\providecommand{\bysame}{\leavevmode\hbox to3em{\hrulefill}\thinspace}
\providecommand{\MR}{\relax\ifhmode\unskip\space\fi MR }
% \MRhref is called by the amsart/book/proc definition of \MR.
\providecommand{\MRhref}[2]{%
  \href{http://www.ams.org/mathscinet-getitem?mr=#1}{#2}
}
\providecommand{\href}[2]{#2}
\begin{thebibliography}{10}

\bibitem{AllHJ15}
Mahsa Allahbakhshi, Soonjo Hong, and Uijin Jung, \emph{Class-closing factor
  codes and constant-class-to-one factor codes from shifts of finite type},
  Dyn. Syst. \textbf{30} (2015), no.~4, 485--500. \MR{3430312}

\bibitem{AllHJ15E}
MAHSA ALLAHBAKHSHI, SOONJO HONG, and UIJIN JUNG, \emph{Structure of transition
  classes for factor codes on shifts of finite type}, Ergodic Theory and
  Dynamical Systems \textbf{35} (2015), 2353--2370.

\bibitem{AllQ13}
Mahsa Allahbakhshi and Anthony Quas, \emph{Class degree and relative maximal
  entropy}, Trans. Amer. Math. Soc. \textbf{365} (2013), no.~3, 1347--1368.
  \MR{3003267}

\bibitem{Bow08}
Rufus Bowen, \emph{Equilibrium states and the ergodic theory of {A}nosov
  diffeomorphisms}, revised ed., Lecture Notes in Mathematics, vol. 470,
  Springer-Verlag, Berlin, 2008, With a preface by David Ruelle, Edited by
  Jean-Ren{\'e} Chazottes. \MR{2423393 (2009d:37038)}

\bibitem{BoyT84}
Mike Boyle and Selim Tuncel, \emph{Infinite-to-one codes and markov measures},
  Trans. Amer. Math. Soc. \textbf{285} (1984), no.~2, 657--684. \MR{752497
  (86b:28024)}

\bibitem{ChaU03}
J.~R. Chazottes and E.~Ugalde, \emph{{Projection of Markov Measures May Be
  Gibbsian}}, Journal of Statistical Physics \textbf{111} (2003), no.~5-6,
  1245--1272.

\bibitem{Jun11}
Uijin Jung, \emph{On the existence of open and bi-continuing codes}, Trans.
  Amer. Math. Soc. \textbf{363} (2011), no.~3, 1399--1417. \MR{2737270
  (2012b:37035)}

\bibitem{Kem11}
T.~M~W Kempton, \emph{{Factors of Gibbs measures for subshifts of finite
  type}}, Bulletin of the London Mathematical Society \textbf{43} (2011),
  no.~4, 751--764.

\bibitem{LinM95}
Douglas Lind and Brian Marcus, \emph{An introduction to symbolic dynamics and
  coding}, Cambridge University Press, Cambridge, 1995. \MR{1369092
  (97a:58050)}

\bibitem{JozCK98}
J{\'o}zsef L{\H{o}}rinczi, Christian Maes, and Koen Vande~Velde,
  \emph{Transformations of {G}ibbs measures}, Probab. Theory Related Fields
  \textbf{112} (1998), no.~1, 121--147. \MR{1646444 (99i:60174)}

\bibitem{Pir18}
Mark Piraino, \emph{Projections of gibbs states for h{\"o}lder potentials},
  Journal of Statistical Physics \textbf{170} (2018), no.~5, 952--961.

\bibitem{Sin72}
Ja.~G. Sina{\u\i}, \emph{Gibbs measures in ergodic theory}, Uspehi Mat. Nauk
  \textbf{27} (1972), no.~4(166), 21--64. \MR{0399421 (53 \#3265)}

\bibitem{vanFS93}
Aernout C.~D. van Enter, Roberto Fern{\'a}ndez, and Alan~D. Sokal,
  \emph{Regularity properties and pathologies of position-space
  renormalization-group transformations: scope and limitations of {G}ibbsian
  theory}, J. Statist. Phys. \textbf{72} (1993), no.~5-6, 879--1167.
  \MR{1241537 (94m:82012)}

\bibitem{Ver11}
Evgeny Verbitskiy, \emph{{On factors of g-measures}}, Indagationes Mathematicae
  \textbf{22} (2011), no.~3-4, 315--329.

\bibitem{Yoo10}
Jisang Yoo, \emph{On factor maps that send {M}arkov measures to {G}ibbs
  measures}, J. Stat. Phys. \textbf{141} (2010), no.~6, 1055--1070. \MR{2740403
  (2011k:37005)}

\bibitem{Yoo15}
\bysame, \emph{On the retracts and recodings of continuing codes}, Bull. Korean
  Math. Soc. \textbf{52} (2015), no.~4, 1375--1382.

\end{thebibliography}

\end{document}